\newtheorem{theorem}{Theorem}[section]
\newtheorem{corollary}[theorem]{Corollary}
\newtheorem{lemma}[theorem]{Lemma}
\newtheorem{proposition}[theorem]{Proposition}
\newtheorem{definition}[theorem]{Definition}
\newtheorem{example}[theorem]{Example}
\newtheorem{remark}[theorem]{Remark}
\newenvironment{proof}[1][Proof]{\noindent\textbf{#1.} }{\ \rule{0.5em}{0.5em}}
\begin{document}

\title{Rigidity of matrix group actions on $\mathrm{CAT(0)}$ spaces with
possible parabolic isometries and uniquely arcwise connected spaces}
\author{Shengkui Ye}
\maketitle

\begin{abstract}
It is well-known that $\mathrm{SL}_{n}(\mathbf{Q}_{p})$ acts without fixed
points on an $(n-1)$-dimensional $\mathrm{CAT}(0)$ space (the affine
building). We prove that $n-1$ is the smallest dimension of $\mathrm{CAT}(0)$
spaces on which matrix groups act without fixed points. Explicitly, let $R$
be an associative ring with identity and $E_{n}^{\prime }(R)$ the extended
elementary subgroup. Any isometric action of $E_{n}^{\prime }(R)$ on a
complete $\mathrm{CAT(0)}$ space $X^{d}$ of dimension $d<n-1$ has a fixed
point. Similar results are discussed for automorphism groups of free groups.
Furthermore, we prove that any action of $\mathrm{Aut}(F_{n}),n\geq 3,$ on a
uniquely arcwise connected space by homeomorphisms has a fixed point.
\end{abstract}

\section{Introduction\label{section1}}

Let $p$ be a prime, $\mathbf{Q}_{p}$ the $p$-adic number field and $\mathrm{%
SL}_{n}(\mathbf{Q}_{p})$ the special linear group. It is well-known that $%
\mathrm{SL}_{n}(\mathbf{Q}_{p})$ acts without fixed points on the affine
building $X,$ which is an $(n-1)$-dimensional $\mathrm{CAT}(0)$ space. In
the article, we prove that $n-1$ is the smallest dimension of $\mathrm{CAT}%
(0)$ spaces on which $\mathrm{SL}_{n}(\mathbf{Q}_{p})$ acts without fixed
points. More generally, let $R$ be an associative ring with identity and $%
E_{n}^{\prime }(R)<\mathrm{GL}_{n}(R)$ the extended elementary subgroup (cf.
Section \ref{elem}), which is an analog of $\mathrm{SL}_{n}(\mathbf{Q}_{p})$
for general rings. Out first result is the following.

\begin{theorem}
\label{1.1}Any isometric action of the extended elementary group $%
E_{n}^{\prime }(R)$ on a complete $\mathrm{CAT}(0)$ space $X$ of dimension $%
d<n-1$ has a fixed point.
\end{theorem}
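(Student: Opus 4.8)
The plan is to reformulate the theorem as a common-fixed-point statement for the root subgroups and then to combine an ellipticity argument with a Helly-type intersection theorem whose threshold is governed by the dimension bound $d<n-1$. Throughout I would use two standard facts about a complete $\mathrm{CAT}(0)$ space $X$: a group of isometries with a bounded orbit has a fixed point (its circumcenter), and the fixed-point set $\mathrm{Fix}(H)$ of any subgroup $H$ is a closed convex subset. Write $U_{ij}=\{e_{ij}(r):r\in R\}\cong(R,+)$ for the root subgroups generating the elementary part of $E_n^{\prime}(R)$, and record the Steinberg relations $[e_{ij}(r),e_{jk}(s)]=e_{ik}(rs)$ for distinct $i,j,k$ together with $[e_{ij}(r),e_{kl}(s)]=1$ when $j\neq k$ and $i\neq l$. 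The goal becomes to show $\bigcap_{i\neq j}\mathrm{Fix}(U_{ij})\neq\emptyset$.

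First I would show that each $U_{ij}$ is elliptic, i.e. that $F_{ij}:=\mathrm{Fix}(U_{ij})\neq\emptyset$. The leverage is that for $n\ge 3$ every generator is a commutator, $e_{ik}(r)=[e_{ij}(r),e_{jk}(1)]$; being a commutator of this prescribed form constrains its translation length, and in a space of dimension $d<n-1$ this should force vanishing displacement, hence (using completeness) a fixed point, even in the presence of parabolic isometries, which is the delicate point since the infimum of the displacement need not a priori be attained. Once $U_{ij}$ is elliptic, its abelianness yields the nonempty closed convex set $F_{ij}$.

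The heart of the argument is the observation that the relations descend to the fixed sets: if a point $p$ lies in $F_{ij}\cap F_{jk}$, then it is fixed by every $e_{ij}(r)$ and $e_{jk}(s)$, hence by their commutator $e_{ik}(rs)$, so $F_{ij}\cap F_{jk}\subseteq F_{ik}$. This closure property means a global fixed point is produced as soon as a suitable spanning subfamily of the $F_{ij}$ has a common point. I would then invoke a Helly-type theorem for closed convex subsets of a finite-dimensional complete $\mathrm{CAT}(0)$ space, where in dimension $d$ the relevant Helly number is of the order of $d+1$: the $n$ coordinate directions give a configuration for which every subfamily of size $\le d+1<n$ already intersects, by the commutator closure together with an induction on $n$ for the lower-rank pieces, so the full intersection is nonempty. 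The strict inequality $d<n-1$ is exactly the numerical input making the Helly number fall below the number of directions. Finally, the extra generators of the \emph{extended} group $E_n^{\prime}(R)$ normalize the family of root subgroups, hence permute the $F_{ij}$ and preserve the canonical common fixed set produced above, so they fix it as well.

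The main obstacle I expect is twofold and concentrated at the boundary dimension $d=n-2$. First, pushing the translation-length estimate through when root subgroups act parabolically, so that the sets $F_{ij}$ are genuinely nonempty with no properness hypothesis on $X$; and second, verifying the hypotheses of the Helly-type theorem for possibly unbounded convex sets and organizing the induction so that the $(d+1)$-wise intersections are controlled, since at $d=n-2$ the inductive hypothesis gives nothing about the codimension-one subgroups and the entire conclusion must be extracted from the geometry and the commutator closure relation $F_{ij}\cap F_{jk}\subseteq F_{ik}$.
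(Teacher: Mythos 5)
Your reduction to showing $\bigcap_{i\neq j}\mathrm{Fix}(U_{ij})\neq\emptyset$, the closure property $F_{ij}\cap F_{jk}\subseteq F_{ik}$, and the final Helly step are sound and broadly parallel to the paper, which organizes that last stage around the cyclic family $e_{12},e_{23},\dots,e_{n-1,n},e_{n,1}$, any $n-1$ of which generate a conjugate of the (nilpotent, hence elliptic once the root subgroups are elliptic) upper-triangular group. The genuine gap is exactly at the point you flag as delicate: proving $F_{ij}\neq\emptyset$ at all. The mechanism you propose --- that the identities $e_{ik}(r)=[e_{ij}(r),e_{jk}(1)]$ constrain the translation length and ``force vanishing displacement, hence (using completeness) a fixed point'' --- cannot work as stated. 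Commutator/distortion arguments show at best that $\tau(e_{ik}(r))=0$; an isometry of a complete $\mathrm{CAT}(0)$ space with zero translation length and no fixed point is precisely a parabolic isometry, and ruling these out is the entire difficulty separating this theorem from the earlier semisimple results of Farb and Bridson. Completeness does not help: the infimum of the displacement function simply need not be attained, and no dimension count enters your estimate at this stage. The paper makes this point explicitly in the remark following its distortion lemma, which for that reason only yields the semisimple property $s\mathcal{FA}_n$.

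The paper's actual route to ellipticity is the key idea you are missing: it multiplies the unipotent generators by involutions. Inside $E_n^{\prime}(R)$ one takes the signed permutations $-I_n(i,i+1)$ and $-I_n\varepsilon_i$ (this is where the \emph{extended} group is genuinely needed when $n$ is even --- the diagonal subgroup $D$ is an essential ingredient of the construction, not a nuisance to be normalized away at the end as in your last step), and observes that the element $e_{12}(x)\cdot(-I_n\varepsilon_1)$ together with these signed permutations is a set of involutions generating a quotient of an affine-type Coxeter group whose Coxeter graph is a path with end edges labelled $4$; every proper parabolic subgroup obtained by deleting one vertex is of type $B$, hence finite. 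Each finite subgroup has a fixed point by Bruhat--Tits, the Helly theorem in dimension $d<n-1$ gives a common fixed point of all these involutions, and $e_{12}(x)$, being a product of two of them, fixes that point. This converts the parabolic-versus-elliptic question into a combinatorial statement about spherical subsets of a Coxeter graph, which is where the dimension bound is actually consumed; without some such device your first step does not go through.
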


It is a classical result of Serre \cite{Se} that any isometric action of $%
\mathrm{SL}_{n}(\mathbb{Z})$ $(n\geq 3)$ on a simplicial tree has a fixed
point. Farb \cite{fa} considered a high-dimensional analog. For a reduced,
irreducible root system $\Phi $ has rank $r\geq 2$ and a finitely generated
commutative ring $R,$ let $E(\Phi ,R)$ be the elementary subgroup of the
Chevalley group $G(\Phi ,R).$ Farb \cite{fa} proves that any action of $%
E(\Phi ,R)$ on a complete $\mathrm{CAT}(0)$ space $X$ of dimension $d<r-1$
by \emph{semisimple} isometries has a fixed point. This gives also a
generalization of a result obtained by Fukunaga \cite{fu} concerning groups
acting on trees$.$ Let $\Sigma _{g}$ be the orientable closed surface and $%
\mathrm{MCG}(\Sigma _{g})$ the mapping class group. Bridson \cite{bdm, brid}
proves any action of $\mathrm{MCG}(\Sigma _{g})$ on a complete CAT(0) space $%
X$ of dimension $d<g-1$ by \emph{semisimple} isometries has a fixed point.
The author \cite{ye} obtains similar results for actions of the elementary
subgroup $E_{n}(R)$ and the quadratic elementary subgroup $EU_{2n}(R,\Lambda
)$ for general rings. However, most of results are proved for semisimple
actions. The group action on \textrm{CAT(0) }spaces of\textrm{\ }%
automorphism groups of free groups is studied by Bridson \cite{brid2,brid11}
and Varghese \cite{var}. Barnhill \cite{bar} considers the property $\mathrm{%
FA}_{n}$ for Coxeter groups.

In this article, we study group actions on \textrm{CAT(0) }spaces with
possible parabolic isometries. Note that the semisimple isometries could be
very different from generic isometries. For example, any action of the
special linear group $\mathrm{SL}_{n}(\mathbb{Z})$ $(n\geq 3)$ on a complete 
\textrm{CAT(0) }space $X$ by semisimple isometries has a fixed point, while $%
\mathrm{SL}_{n}(\mathbb{Z})$ acts on the symmetric space $\mathrm{SL}_{n}(%
\mathbb{R})/SO(n)$ properly and thus without fixed points. It is an open
problem in geometric group theory to study the minimal dimensions of $%
\mathrm{CAT(0)}$ spaces on which matrix groups or automorphism groups of
free groups act without fixed points (see Bridson \cite{bridsonprob},
Question 9.2).

In order to state our results easily, it is better to define the following.

\begin{definition}
(relative property $\mathcal{FA}_{n}$) Let $H$ be a subgroup of a group $G.$
The pair $(G,H)$ has the relative property $\mathcal{FA}_{n}$ (resp. $s%
\mathcal{FA}_{n}$) if whenever $G$ acts on an $n$-dimensional complete $%
\mathrm{CAT(0)}$ space $X$ by isometries (resp. semisimple isometries), the
subgroup $H$ has a fixed point.
\end{definition}

We call a group $G$ has property $\mathcal{FA}_{n}$ if $(G,G)$ has the
relative property $\mathcal{FA}_{n}.$ It should be pointed that we don't
require the isometries to be semisimple in the definition of $\mathcal{FA}%
_{n}.$ Note that the property $s\mathcal{FA}_{n}$ is the same as the strong
property $\mathrm{FA}_{n}$ considered by Farb \cite{fa}.

\begin{theorem}
\label{1.2}Let $R$ be an associative ring and $E_{n}^{\prime }(R)$ the
extended elementary subgroup. Then $(R^{n}\rtimes E_{n}^{\prime }(R),R^{n})$
has the relative property $\mathcal{FA}_{n-1}.$
\end{theorem}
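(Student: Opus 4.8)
The plan is to induct on the dimension $d=\dim X$, proving that for every $d\le n-1$ any isometric action of $G:=R^{n}\rtimes E_{n}^{\prime }(R)$ on a complete $\mathrm{CAT(0)}$ space $X$ of dimension $d$ has the normal subgroup $R^{n}$ fixing a point; the case $d=0$ is trivial. So I fix such an action and suppose, for contradiction, that $R^{n}$ has no fixed point. If the $R^{n}$-orbits were bounded, their circumcenter would be an $R^{n}$-fixed point, so the orbits are unbounded (in particular $R$ is infinite). Write $v_{1},\dots ,v_{n}\in R^{n}\subseteq G$ for the images of the standard basis and recall the Steinberg-type relation $e_{ij}(r)\,v_{j}\,e_{ij}(r)^{-1}=v_{j}+r\,v_{i}$, which comes from $e_{ij}(r)e_{j}=e_{j}+re_{i}$. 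Let $\ell (g)=\lim_{m}\tfrac{1}{m}d(x_{0},g^{m}x_{0})$ be the (stable) translation length; it is independent of $x_{0}$, invariant under conjugation in $G$, symmetric, homogeneous, and — because $(ab)^{m}=a^{m}b^{m}$ for commuting $a,b$ together with the triangle inequality gives $\ell (ab)\le \ell (a)+\ell (b)$ — subadditive on commuting elements. Hence $\ell $ restricts to a conjugation-invariant seminorm on the abelian group $R^{n}$.

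The first and cleanest step is to show $\ell \equiv 0$ on $R^{n}$. Conjugating by $e_{ij}(mr)\in E_{n}^{\prime }(R)$ (for $i\neq j$, $r\in R$, $m\in \mathbb{Z}$) and using the relation above gives $\ell (v_{j}+m\,r\,v_{i})=\ell (v_{j})$ for every $m$. Since $\ell $ is a seminorm on $R^{n}$,
\[
|m|\,\ell (r\,v_{i})=\ell\!\big(m\,r\,v_{i}\big)\le \ell\!\big(v_{j}+m\,r\,v_{i}\big)+\ell (v_{j})=2\,\ell (v_{j}),
\]
so letting $m\to \infty$ forces $\ell (r\,v_{i})=0$. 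As the elements $r\,v_{i}$ generate $R^{n}$ and $\ell $ is subadditive, $\ell $ vanishes identically on $R^{n}$. I emphasise that this step uses no bound on the dimension; all of the geometric difficulty, and the entire role of the hypothesis $d\le n-1$, is pushed into the parabolic analysis that follows.

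With $\ell |_{R^{n}}\equiv 0$ I next invoke the Adams--Ballmann dichotomy for the amenable group $R^{n}$: either $R^{n}$ stabilises a flat $F\cong \mathbb{E}^{k}$, or it fixes a point of $\partial X$. In the flat case, computing $\ell $ from a basepoint inside $F$ shows the Euclidean translation parts all vanish, so $R^{n}$ acts on $F$ by commuting elliptic isometries and therefore has a common fixed point in $F$, contradicting our assumption. Hence $R^{n}$ fixes a point at infinity, and the nonempty fixed set $Y=\mathrm{Fix}_{\partial X}(R^{n})$ is $\pi $-convex and $G$-invariant (as $R^{n}\trianglelefteq G$); finite-dimensionality then provides a canonical $G$-fixed point $\xi _{0}\in \partial X$. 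The Busemann character $\beta \colon G\to \mathbb{R}$ attached to $\xi _{0}$ vanishes on $R^{n}$ (since $\ell \equiv 0$ there) and on $E_{n}^{\prime }(R)$ (which, being perfect for $n\ge 3$, admits no nontrivial homomorphism to $\mathbb{R}$), so $G$ preserves every horosphere centred at $\xi _{0}$. Passing to the transverse $\mathrm{CAT(0)}$ space $X_{\xi _{0}}$, of dimension $\le d-1$, one obtains an induced action of $G$ with $R^{n}$ still normal, and the inductive hypothesis yields an $R^{n}$-fixed point of $X_{\xi _{0}}$.

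The hard part, as expected, is this final parabolic phase: promoting the fixed data in the lower-dimensional transverse space $X_{\xi _{0}}$ back to an honest fixed point of $R^{n}$ in $X$. A fixed point of $R^{n}$ in $X_{\xi _{0}}$ only produces an $R^{n}$-invariant horocyclic fibre in $X$, on which $R^{n}$ may again act without a fixed point; setting up the transverse action carefully, controlling this fibration, and arranging that the descent terminates in a genuine fixed point is the crux of the whole argument, and it is exactly here that finite-dimensionality and the bound $d\le n-1$ must be used. This is precisely the obstruction that the semisimple-only hypotheses of Farb and Bridson circumvent via the Flat Torus Theorem; the point of the present theorem is to handle it in the presence of possible parabolic isometries, with the affine building of $\mathrm{SL}_{n}(\mathbf{Q}_{p})$ showing that the dimension constraint cannot be relaxed in the ambient problem.
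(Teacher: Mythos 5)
Your argument is not complete, and the missing piece is exactly the one you flag yourself. The first step (stable translation length $\ell$ vanishes on $R^{n}$ via the distortion relation $e_{ij}(mr)\,v_{j}\,e_{ij}(mr)^{-1}=v_{j}+mr\,v_{i}$ and homogeneity of $\ell$) is fine, but it only tells you every element of $R^{n}$ is elliptic or neutral parabolic. Everything after that is a programme rather than a proof: you invoke the Adams--Ballmann dichotomy (which in this generality, for complete non-proper finite-dimensional spaces, already requires the Caprace--Lytchak extension), pass to a fixed point $\xi_{0}\in\partial X$ and the transverse space $X_{\xi_{0}}$, and then concede that ``promoting the fixed data in $X_{\xi_{0}}$ back to an honest fixed point of $R^{n}$ in $X$'' is the crux and is left unresolved. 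That concession is fatal: the induction does not close. A fixed point of $R^{n}$ in $X_{\xi_{0}}$ only gives an $R^{n}$-invariant horoball/fibre, and a single neutral parabolic on $\mathbb{H}^{2}$ (transverse space a point) shows that nothing forces a fixed point downstairs. You never identify where the hypothesis $d\le n-1$ actually enters, which is the entire content of the theorem. In addition, two smaller points: the circumcenter-at-infinity argument needs $\mathrm{Fix}_{\partial X}(R^{n})$ to have intrinsic radius at most $\pi/2$, which you do not verify; and $E_{n}'(R)$ is not perfect when $n$ is even (it has $E_{n}(R)$ as an index-$2$ subgroup), though its abelianization is still torsion so the Busemann character does vanish.

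For contrast, the paper's proof avoids boundary theory entirely. It observes that inside $R^{n}\rtimes E_{n}'(R)$ the set $S_{x}=\{(x,\sigma_{1}),(12),(23),\dots,(n-1,n),\sigma_{n}\}$ consists of $n+1$ involutions which receive a surjection from the affine-type Coxeter group on a path of $n+1$ vertices with end edges labelled $4$; every $n$-element subset of the generators spans a graph of type $B$ (or a disjoint union of such), hence generates a finite group, which has a fixed point by the Bruhat--Tits lemma. Since $d+1\le n$, Helly's theorem gives a common fixed point of all of $S_{x}$, and $(x,0,\dots,0)=(x,\sigma_{1})\cdot\sigma_{1}$ lies in $\langle S_{x}\rangle$, so the subgroup $(\mathbb{Z}x)^{n}$ has a bounded orbit and a fixed point; letting $x$ range over $R$ and using commutativity gives the fixed point for $R^{n}$. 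If you want to salvage your approach you would need a genuinely new argument for the parabolic descent; otherwise the Coxeter/Helly route is the one that actually uses the dimension bound.
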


Since $E_{n}(\mathbb{Z}[\frac{1}{p}])\ltimes \mathbb{Z}[\frac{1}{p}]^{n}$ as
a subgroup of $\mathrm{GL}_{n+1}(\mathbf{Q}_{p})$ acts on an affine building 
$X$ of dimension $n$ without fixed points, the dimension in Theorem \ref{1.2}
is sharp.

Let $\mathrm{Aut}(F_{n})$ be the automorphism group of the rank-$n$ free
group $F_{n}.$

\begin{theorem}
\label{1.3}$(\mathrm{Aut}(F_{n}),\mathrm{Aut}(F_{2}))$ has the relative
property $\mathcal{FA}_{n-3}$ for any $n\geq 4.$
\end{theorem}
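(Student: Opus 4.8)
The plan is to run the given action of $\mathrm{Aut}(F_n)$ on a complete CAT(0) space $X$ with $\dim X\le n-3$ through the free-factor splitting $F_n=\langle x_1,x_2\rangle *\langle x_3,\dots ,x_n\rangle$, and to feed the $(n-2)$ ``outer'' letters $x_3,\dots ,x_n$ into Theorem \ref{1.2} as the coordinates on which a copy of $E_{n-2}^{\prime }(R)$ acts. Concretely, I would search for subgroups of $\mathrm{Aut}(F_n)$ of the shape $A\rtimes L$ with $A\cong R^{\,n-2}$ free abelian, $L$ a copy of $E_{n-2}^{\prime }(R)$ realised by elementary automorphisms permuting and transvecting $x_3,\dots ,x_n$, and with each target Nielsen generator of $\mathrm{Aut}(F_2)\hookrightarrow \mathrm{Aut}(F_n)$ (for instance $x_1\mapsto x_1x_2$) placed inside the normal abelian part $A$. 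Since $\dim X\le n-3=(n-2)-1$, Theorem \ref{1.2} applies to each such subgroup and forces $A$, and hence each chosen generator, to have a nonempty closed convex fixed set $\mathrm{Fix}(A)\subseteq X$; this is also where the available $n-2$ coordinates pin the bound at $n-3$.

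I would fix a finite generating set of $\mathrm{Aut}(F_2)$ by Nielsen transformations: the two transvections $x_1\mapsto x_1x_2$ and $x_2\mapsto x_2x_1$, the swap $x_1\leftrightarrow x_2$, and the inversion $x_1\mapsto x_1^{-1}$. The two finite-order generators are automatically elliptic, since a finite group acting on a complete CAT(0) space fixes the circumcenter of any orbit, so the real work is to make the two infinite-order transvections elliptic and, above all, to produce a single point fixed by all four at once. For the assembly I would exploit convexity: commuting elliptic isometries preserve one another's fixed sets, so I would arrange the distinguished abelian subgroups to commute (using that a right-multiplication by a fixed letter and a disjoint-support modification commute), intersect their fixed sets successively, and at each stage re-apply Theorem \ref{1.2} to the action restricted to $\mathrm{Fix}(A)$ --- itself a complete CAT(0) space of dimension $\le n-3$ --- so that ellipticity survives the passage to a subspace. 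A cleaner finish, if attainable, is to show that the $\mathrm{Aut}(F_2)$-orbit of a basepoint is bounded and take its circumcenter.

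The hard part will be the first step: matching the matrix-group hypothesis of Theorem \ref{1.2} with the genuinely non-abelian group $\mathrm{Aut}(F_n)$. Because $F_n$ is free, the naive free-abelian subgroups such as $\{x_i\mapsto x_ix_1^{a_i}:i\ge 3\}$ are \emph{not} normalised by the elementary automorphisms acting on $x_3,\dots ,x_n$: conjugating $x_i\mapsto x_ix_1^{a_i}$ by the transvection $x_i\mapsto x_ix_j$ produces $x_i\mapsto x_i\,x_jx_1^{c}x_j^{-1}$, and since $x_1$ and $x_j$ do not commute this leaves the abelian subgroup. One cannot repair this by collapsing $A$ onto a single transvection through a non-injective homomorphism either, because the standard module $R^{\,n-2}$ is irreducible over $E_{n-2}^{\prime }(R)$. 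Overcoming this obstruction --- either by a careful choice of the ring $R$ and the module so that the needed submodule is invariant, or by replacing the literal affine embedding with a commutator/relative-generation argument that still lets Theorem \ref{1.2} apply --- is the crux of the matter. Controlling possible \emph{parabolic} behaviour when intersecting the convex fixed sets is the secondary difficulty, to be dispatched by the repeated restriction argument above.
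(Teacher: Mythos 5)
Your proposal has a genuine gap, and you have in fact located it yourself: the attempt to realise $R^{\,n-2}\rtimes E_{n-2}^{\prime}(R)$ inside $\mathrm{Aut}(F_n)$ with a transvection such as $x_1\mapsto x_1x_2$ sitting in the normal abelian part does not work, precisely because the abelian group of transvections $\{x_i\mapsto x_i x_1^{a_i}\}$ is not normalised by the elementary automorphisms of $\langle x_3,\dots,x_n\rangle$, and no choice of ring or module repairs this. Since you leave this ``crux of the matter'' unresolved, the argument does not get off the ground: Theorem \ref{1.2} is never legitimately applicable, and no Nielsen generator is shown to be elliptic. The paper circumvents exactly this obstruction by a two-stage argument that never tries to put $\rho_{12}$ into a normalised abelian subgroup. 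First (Theorem \ref{niel}) it observes that the \emph{difference} of two transvections twisted by a permutation, $\rho_{21}^{-1}\rho_{31}(23)$, is an involution, and together with the transpositions $(34),\dots,(n-1,n),(2,n)$ it generates a quotient of a Coxeter group whose graph is a loop of length $n-1$; every $(n-2)$-element subset spans a spherical (type $A$) subdiagram, so Helly's theorem in dimension $d<n-2$ gives a common fixed point, making $\rho_{21}^{-1}\rho_{31}$ elliptic. Second, it bootstraps from this auxiliary elliptic element to the transvection itself via the relation $T^{-p}\rho_{13}T^{p}=\rho_{13}\rho_{12}^{p}$ with $T=\rho_{32}^{-1}\rho_{42}$ and the Gersten-type bounded-orbit computation of Lemma \ref{gers}. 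Neither ingredient appears in your plan.

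There is also a secondary gap in your assembly step. Knowing that each pair of your four generators of $\mathrm{Aut}(F_2)$ has a common fixed point does not yield a global fixed point by ``intersecting fixed sets successively'': three closed convex sets with pairwise nonempty intersections can have empty triple intersection once $\dim X\geq 2$, and restricting to $\mathrm{Fix}(A)$ does not resolve this. The paper needs Bridson's Lemma \ref{3.4}, exploiting the $2[\frac{n}{2}]$ pairwise-commuting diagonal copies of $\mathrm{Aut}(F_2)$ inside $\mathrm{Aut}(F_n)$, to promote pairwise fixed points of the generating set $\{\rho_{12}\varepsilon_2,\varepsilon_1,\varepsilon_2,\varepsilon_1\varepsilon_2(12)\}$ to a common one. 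Your circumcenter remark handles only the finite-order generators, which is the easy part.
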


\begin{corollary}
\label{1.4}Any isometric action of $\mathrm{Aut}(F_{n})$ $(n\geq 4)$ on a
complete $\mathrm{CAT(0)}$ space $X$ of dimension $d\leq 2[\frac{n}{3}]$ has
a fixed point, i.e. $\mathrm{Aut}(F_{n})$ has property $\mathcal{FA}%
_{2[n/3]}.$ Here $[n/3]$ is the integer part.
\end{corollary}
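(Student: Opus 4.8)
The plan is to deduce Corollary \ref{1.4} from Theorem \ref{1.3} by realizing $\mathrm{Aut}(F_n)$ as generated by finitely many subgroups whose fixed-point sets can be forced to meet once $d\le 2[n/3]$. Write $F_n=\langle x_1,\dots,x_n\rangle$. For a pair $\{i,j\}$ let $A_{ij}\cong\mathrm{Aut}(F_2)$ be the copy generated by the Nielsen transformations supported on $\{x_i,x_j\}$, and for a triple $T$ let $G_T\cong\mathrm{Aut}(F_3)$ be the copy supported on $T$; since every elementary Nielsen transformation involves at most two letters, the families $\{A_{ij}\}$ and $\{G_T\}$ each generate $\mathrm{Aut}(F_n)$, and two of these subgroups commute exactly when their supports are disjoint. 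The symmetric group $S_n$ permutes the $x_i$ and acts on $\mathrm{Aut}(F_n)$, so Theorem \ref{1.3} applied to $\mathrm{Aut}(F_n)$ together with all of its $S_n$-conjugates gives every standard $A_{ij}$ a fixed point whenever the dimension is at most $n-3$. Since fixed-point sets are closed and convex, the problem becomes showing that the whole family of these convex sets has a common point.

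The one clean geometric tool I would isolate first is a commuting-combination lemma: if $H,K$ are commuting groups of isometries of a complete $\mathrm{CAT}(0)$ space $X$ with $\mathrm{Fix}(H),\mathrm{Fix}(K)\ne\varnothing$, then $\mathrm{Fix}(H)\cap\mathrm{Fix}(K)\ne\varnothing$. The point is that $H$ preserves the closed convex set $C=\mathrm{Fix}(K)$, so the nearest-point projection $\pi\colon X\to C$ is $H$-equivariant and carries any point of $\mathrm{Fix}(H)$ to a point of $\mathrm{Fix}(H)\cap C$; crucially this step needs no bound on $\dim X$. Iterating, any finite family of pairwise-commuting subgroups, each with a fixed point, has a common fixed point. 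Partitioning $\{1,\dots,n\}$ into mutually disjoint triples, the corresponding copies $G_{T_1},\dots,G_{T_m}$ with $m=[n/3]$ pairwise commute, so once each of them fixes a point they already share one.

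What remains, and what I expect to be the main obstacle, is to fold in the overlapping generators — the $A_{ij}$ or $G_T$ whose supports meet the chosen triples — which do not commute with the commuting blocks, and to show that the full family of convex fixed-sets still intersects once $d\le 2[n/3]$. Here I would argue by a Helly/nerve-type induction on the number of subgroups in a finite-dimensional $\mathrm{CAT}(0)$ space, verifying the small intersections by the commuting lemma and controlling how many convex sets must be handled simultaneously; the bookkeeping is meant to charge two units of dimension to each of the $[n/3]$ commuting blocks, which is what should produce the threshold $2[n/3]$ and mirrors the $\mathbb{Z}^{\mathrm{rank}}$-type lower bound already visible in the affine-building example after Theorem \ref{1.2}. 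The delicate points are that a single invocation of Theorem \ref{1.3} only controls dimension $n-3$ rather than $2[n/3]$, and that overlapping subgroups carry no free intersection property, so the entire dependence on $d$ must be extracted from this assembly step rather than from the dimension-free commuting combination.
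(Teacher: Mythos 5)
Your overall shape --- commuting diagonal blocks, Helly's theorem, and ``charging two dimensions per block'' --- matches the spirit of the paper's argument, but as written the proposal has two genuine gaps. First, the input you propose is unavailable in exactly the borderline cases. You want Theorem \ref{1.3} to give each standard copy $A_{ij}\cong\mathrm{Aut}(F_2)$ a fixed point, but that theorem only applies when $d<n-2$, whereas the corollary must cover $d\le 2[\frac{n}{3}]$; for $n=4$ one has $2[\frac{n}{3}]=2>1=n-3$, and for $n=6$ one has $4>3$, so in those cases your very first step already fails. The paper does not use Theorem \ref{1.3} here at all: it works with an explicit generating set $S=\{\varepsilon _{2}\rho _{12},\varepsilon _{1}(23),\varepsilon_{1}\varepsilon _{2}(12),(i,i+1),\varepsilon _{n}\}$ whose elements are twisted by involutions so that every \emph{pair} generates a \emph{finite} group; the two-wise common fixed points then come for free from the Bruhat--Tits fixed point theorem in every dimension, with no appeal to fixed points of whole $\mathrm{Aut}(F_2)$ subgroups.

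Second, the assembly step that actually produces the threshold $2[\frac{n}{3}]$ is precisely the part you defer to ``bookkeeping,'' and it is the entire content of the proof. The missing tool is Bridson's Lemma \ref{3.4}: given $m$ pairwise commuting subsets $S_{1},\dots ,S_{m}$ such that every $k_{i}$-element subset of $S_{i}$ has a common fixed point and $d<k_{1}+\cdots +k_{m}$, some $S_{i}$ (hence, by conjugacy, the standard one) has all of its finite subsets sharing a fixed point. The paper applies this with $m=[\frac{n}{3}]$ diagonal copies of the four-element set $S_{1}\subset \mathrm{Aut}(F_{3})$ and $k_{i}=2$ to get a global fixed point for $S_{1}$ when $d<2[\frac{n}{3}]$, then runs an induction over $k$-element subsets of $S$ (using the sets $S_{k}\subset \mathrm{Aut}(F_{k})$ together with $[\frac{n}{k}]$ commuting copies and the inequality $2[\frac{n}{3}]\leq (k-1)[\frac{n}{k}]$) to verify the Helly hypothesis for all $(d+1)$-element subsets of $S$. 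Your dimension-free commuting-combination lemma is correct --- it is essentially Lemma \ref{le1} --- but it only handles disjointly supported blocks; without the pigeonhole of Lemma \ref{3.4} and a torsion generating set making the small intersections automatic, the overlapping generators are never folded in and the bound $2[\frac{n}{3}]$ is never actually derived.
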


The previous result was also observed by Bridson \cite{brid2}. The case of $%
d\leq 2[n/2]$ is already known by Varghese \cite{var}.

The proofs of the above theorems are based on Helly's theorem and the theory
of Coxeter groups. The method has further applications to group actions on
1-dimensional spaces, like dendrites and unique arcwise connected spaces.
Recall that a dendrite $X$ is a connected compact metrizable space
(continuum) such that any two points are the extremities of a unique arc in $%
X$ (cf. \cite{dm}). Let $\Gamma $ be a lattice in a semisimple algebraic
group of rank at least two. Buchesne and Monod \cite{dm} prove that any
action of $\Gamma $ on a dendrite $X$ fixes a fixed point or a pair of two
points. An $\mathbb{R}$--tree is a geodesic metric space in which there is a
unique arc connecting each pair of points. Bogopolski \cite{bo} prove that
any isometric action of $\mathrm{Aut}(F_{n})$ on a simplicial tree has a
fixed point. Culler and Vogtmann \cite{cv} give a short proof based on their
idea of \textquotedblleft minipotent\textquotedblright\ elements. Bridson 
\cite{brid08} proves a similar result for group actions on $\mathbb{R}$%
-trees using the triangle condition. We prove the following.

\begin{theorem}
\label{last}Let $X$ be a unique arcwise connected space (eg. a tree or
dendrite). Any action of $\mathrm{Aut}(F_{n})$ (or $\mathrm{SL}_{n}(\mathbb{Z%
}),n\geq 3$) on $X$ by homeomorphisms has a fixed point.
\end{theorem}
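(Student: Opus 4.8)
The plan is to show that $G$ (writing $G$ for $\mathrm{Aut}(F_{n})$ or $\mathrm{SL}_{n}(\mathbb{Z})$, $n\geq 3$) cannot act on a uniquely arcwise connected space $X$ by homeomorphisms without a global fixed point, by excluding the two ways in which a fixed-point-free action could arise. First I would set up a structural trichotomy for an action on a uniquely arcwise connected space $X$, the purely topological analogue of the elliptic/parabolic/hyperbolic trichotomy on $\mathbb{R}$-trees: either the group has a global fixed point, or it preserves an end of $X$, or some element $g$ is a fixed-point-free translation, meaning that $g$ has no fixed point but admits a canonical $g$-invariant line $A_{g}$ (an arc joining two ends) on which $g$ acts freely, pushing every point toward a distinguished end $\omega$. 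The whole argument then reduces to ruling out the last two possibilities.

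To exclude fixed-point-free translations I would exploit the nilpotent relations inside $G$. For distinct indices $i,j,k$ one has the Steinberg-type identity $e_{ij}=[e_{ik},e_{kj}]$, in which both $e_{ik}$ and $e_{kj}$ commute with $e_{ij}$; hence $N:=\langle e_{ik},e_{kj}\rangle$ is a Heisenberg group with center $\langle e_{ij}\rangle$, and the analogous relations among Nielsen transformations hold in $\mathrm{Aut}(F_{n})$ (this is where $n\geq 3$ enters). Suppose $e_{ij}$ were a fixed-point-free translation with canonical axis $A=A_{e_{ij}}\cong\mathbb{R}$ and distinguished end $\omega$. Since $e_{ik}$ and $e_{kj}$ centralize $e_{ij}$, they preserve the canonical data $A$ and $\omega$; so, after passing to the orientation-preserving finite-index subgroup, $N$ acts on $A\cong\mathbb{R}$ by orientation-preserving homeomorphisms with its center $\langle e_{ij}\rangle$ acting freely. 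A free orientation-preserving $\mathbb{Z}$-action on $\mathbb{R}$ is properly discontinuous with quotient $S^{1}$, so this exhibits $N$ as the pullback of the induced $\mathbb{Z}^{2}=N/\langle e_{ij}\rangle$ action on $S^{1}$, a central extension classified by the Euler class of that $\mathbb{Z}^{2}$-action. But the Euler class of any $\mathbb{Z}^{2}$-action on the circle vanishes (its image in the bounded cohomology of the amenable group $\mathbb{Z}^{2}$ is zero, and $H^{2}(\mathbb{Z}^{2};\mathbb{Z})$ is torsion-free), so the extension would split and $N$ would be abelian, contradicting that $N$ is a nonabelian Heisenberg group. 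Hence no $e_{ij}$ can be a fixed-point-free translation, and I would leverage the trichotomy to reduce the third alternative to (a conjugate of) such a generator.

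To exclude an invariant end, I would use that fixing an end without fixing a point endows the action with a nontrivial forward direction: it yields either a nontrivial homomorphism $G\to\mathbb{R}$, or a left-invariant order on an infinite quotient coming from the linear order along an invariant ray, i.e. a nontrivial orientation-preserving action on a line. Neither is available for our groups: $\mathrm{Aut}(F_{n})$ and $\mathrm{SL}_{n}(\mathbb{Z})$ $(n\geq 3)$ have finite abelianization, so $\mathrm{Hom}(G,\mathbb{R})=0$; and since they contain torsion while, by their normal subgroup structure, every infinite quotient still contains torsion, no such quotient is left-orderable. With both the end alternative and the fixed-point-free-translation alternative excluded, the trichotomy leaves only a global fixed point.

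The main obstacle is establishing the structural trichotomy itself, and in particular producing the canonical invariant line $A_{g}$ and distinguished end for a fixed-point-free homeomorphism in this purely topological, possibly non-metric and non-complete, setting. With only homeomorphisms there is no translation length, no convexity, and no Helly circumcenter to lean on, so the tree and $\mathrm{CAT}(0)$ machinery used earlier in the paper must be replaced by a direct analysis of the nested arcs $[x,g^{k}x]$ and of the centralizer's action on the resulting data; the delicate point is to make the axis genuinely canonical, so that the Heisenberg centralizer is forced to preserve it, and to reduce the abstract ``hyperbolic element'' of the third case to one of the Steinberg generators. Once the axis is canonical, the Euler-class obstruction above is the clean algebraic-topological input that makes the homeomorphism case go through, and the final assembly into a single global fixed point is handled by the median (Helly) property of uniquely arcwise connected spaces applied to the fixed sets of the now-elliptic generators.
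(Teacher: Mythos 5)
There is a genuine gap: your argument is a programme rather than a proof, and its load-bearing step is exactly the part you leave open. The entire strategy rests on a trichotomy (global fixed point / invariant end / element with a \emph{canonical} invariant line on which it translates) for actions by homeomorphisms on an arbitrary uniquely arcwise connected Hausdorff space. No such statement is proved or cited, and in this purely topological setting it is not a routine adaptation of the $\mathbb{R}$-tree theory: there is no translation length, ``ends'' and axes are not obviously well defined or canonical, and without canonicity the centralizer of $e_{ij}$ has no reason to preserve the putative axis, which is what your Heisenberg/Euler-class contradiction needs. The end-fixing case is similarly unsupported: the claim that every infinite quotient of $\mathrm{Aut}(F_n)$ contains torsion (so that no infinite quotient is left-orderable) is asserted without justification, and the derivation of a left-orderable quotient or a homomorphism to $\mathbb{R}$ from an invariant end is only sketched. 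Finally, even if every generator were shown to be elliptic, your closing appeal to a ``median (Helly) property of fixed sets'' fails for individual elements: the fixed-point set of an infinite-order elliptic homeomorphism of a uniquely arcwise connected space need not be arcwise connected (a homeomorphism of an arc can fix its endpoints and nothing in between), so the Helly argument does not apply to these sets.

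The paper's proof avoids all of this dynamics. It shows (Lemma \ref{unique}) that a uniquely arcwise connected space is $1$-Helly good for $\mathrm{Homeo}(X)$: every \emph{finite} subgroup has a fixed point (the union $\bigcup_{h\in H}[x,hx]$ is an invariant dendrite, and finite groups acting on dendrites have fixed points), and fixed sets of finite subgroups are arcwise connected because a finite group cannot act freely on an arc, so Helly's theorem for trees applies to them. Combined with Bridson's triangle criterion (Theorem \ref{hellyfix}) --- $\mathrm{Aut}(F_n)$ and $\mathrm{SL}_n(\mathbb{Z})$, $n\ge 3$, are generated by three sets whose pairwise unions generate finite subgroups --- this immediately yields a global fixed point. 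Note how the finiteness of the relevant subgroups is what rescues both the existence of fixed points and the convexity needed for Helly; your plan replaces finite subgroups by infinite nilpotent ones and thereby loses both properties. If you want to salvage your approach, you would need to first establish the classification of fixed-point-free homeomorphisms of uniquely arcwise connected spaces (essentially Bowditch-level machinery), which is far more work than the theorem requires.
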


This gives a simultaneous generalization of the fixed point property for
both group actions on simplicial trees, $\mathbb{R}$-trees and those on
dendrites.

\section{Notations and basic facts}

\subsection{\textrm{CAT(0)} spaces}

Let $(X,d_{X})$ be a geodesic metric space. For three points $x,y,z\in X,$
the geodesic triangle $\Delta (x,y,z)$ consists of the three vertices $x,y,z$
and the three geodesics $[x,y],[y,z]$ and $[z,x].$ Let $\mathbb{R}^{2}$ be
the Euclidean plane with the standard distance $d_{\mathbb{R}^{2}}$ and $%
\bar{\Delta}$ a triangle in $\mathbb{R}^{2}$ with the same edge lengths as $%
\Delta $. Denote by $\varphi :\Delta \rightarrow \bar{\Delta}$ the map
sending each edge of $\Delta $ to the corresponding edge of $\bar{\Delta}.$
The space $X$ is called a \textrm{CAT(0)} space if for any triangle $\Delta $
and two elements $a,b\in \Delta ,$ we have the inequality 
\begin{equation*}
d_{X}(a,b)\leq d_{\mathbb{R}^{2}}(\varphi (a),\varphi (b)).
\end{equation*}%
The typical examples of \textrm{CAT(0)} spaces include simplicial trees,
hyperbolic spaces, products of \textrm{CAT(0) }spaces and so on. From now
on, we assume that $X$ is a complete \textrm{CAT(0)} space. Denote by 
\textrm{Isom}$(X)$ the isometry group of $X.$ For any $g\in $ \textrm{Isom}$%
(X)$\textrm{, }let 
\begin{equation*}
\mathrm{Minset}(g)=\{x\in X:d(x,gx)\leq d(y,gy)\text{ for any }y\in X\}
\end{equation*}%
and let $\tau (g)=\inf\nolimits_{x\in X}d(x,gx)$ be the translation length
of $g.$ When the fixed-point set $\mathrm{Fix}(g)\neq \emptyset ,$ we call $%
g $ elliptic. When $\mathrm{Minset}(g)\neq \emptyset $ and $d_{X}(x,gx)=\tau
(g)>0$ for any $x\in \mathrm{Minset}(g),$ we call $g$ hyperbolic. The group
element $g$ is called semisimple if the minimal set $\mathrm{Minset}(g)$ is
not empty, i.e. it is either elliptic or hyperbolic. A subset $C$ of a 
\textrm{CAT(0)} space if convex, if any two points $x,y\in C$ can connected
by the geodesic segment $[x,y]\subset C.$ For more details on \textrm{CAT(0) 
}spaces, see the book of Bridson and Haefliger \cite{bh}.

The following lemma is from \cite{bh} (II.2.4).

\begin{lemma}
\label{proj}Let $\gamma :X\rightarrow X$ be an isometry of a CAT(0) space $%
X\ $and $C$ an $\gamma $-invariant convex, complete subspace$.$

(1) For any $x\in X,$ there exists a unique $p(x)\in C$ such that $%
d(x,p(x))=d(x,C):=\inf_{c\in C}d(x,c).$ This gives a projection $%
p:X\rightarrow C,$ which is distance-non-increasing.

(2) We have $p(\gamma x)=\gamma p(x)$ for any $x\in X.$ Moreover, $\mathrm{%
Min}(\gamma |_{C})=\mathrm{Min}(\gamma )\cap C.$
\end{lemma}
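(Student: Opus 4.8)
The plan is to establish the three assertions in turn, building everything on the basic convexity (\textrm{CN}) inequality that holds in a \textrm{CAT(0)} space: for the midpoint $m$ of a geodesic $[c,c^{\prime }]$ and any $x\in X$,
\begin{equation*}
d(x,m)^{2}\leq \tfrac{1}{2}d(x,c)^{2}+\tfrac{1}{2}d(x,c^{\prime })^{2}-\tfrac{1}{4}d(c,c^{\prime })^{2},
\end{equation*}
which follows at once from the comparison inequality applied to the triangle $\Delta (x,c,c^{\prime })$. To prove (1), set $D=d(x,C)$ and choose a minimizing sequence $c_{n}\in C$. Since $C$ is convex, the midpoint $m_{n,k}$ of $[c_{n},c_{k}]$ lies in $C$, so $d(x,m_{n,k})\geq D$; the displayed inequality then gives $\tfrac{1}{4}d(c_{n},c_{k})^{2}\leq \tfrac{1}{2}d(x,c_{n})^{2}+\tfrac{1}{2}d(x,c_{k})^{2}-D^{2}\rightarrow 0$, so $(c_{n})$ is Cauchy. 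As $C$ is complete it converges to a point $p(x)\in C$ realizing $D$, and applying the same inequality to any two minimizers forces them to coincide, giving uniqueness. This defines the projection $p:X\rightarrow C$.

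For the distance-non-increasing statement I would first prove the obtuse-angle property: for $p=p(x)$ and any $q\in C$ one has $\angle _{p}(x,q)\geq \pi /2$. If instead the comparison angle $\bar{\angle}_{p}(x,q)$ in the Euclidean triangle $\bar{\Delta}(x,p,q)$ were $<\pi /2$, some interior point $\bar{q}_{s}$ of $[\bar{p},\bar{q}]$ would be strictly closer to $\bar{x}$ than $\bar{p}$ is; by the \textrm{CAT(0)} inequality the corresponding point $q_{s}\in \lbrack p,q]\subset C$ would satisfy $d(x,q_{s})<d(x,p)=D$, contradicting minimality. Hence $\angle _{p}(x,q)\geq \bar{\angle}_{p}(x,q)\geq \pi /2$. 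Now for arbitrary $x,y$ with projections $p=p(x)$, $q=p(y)$ (the cases $p=q$ or $x,y\in C$ being immediate), let $\alpha (t)$ be the geodesic from $p$ to $x$ and $\beta (t)$ the geodesic from $q$ to $y$, each affinely parametrized on $[0,1]$. By convexity of the metric the function $h(t)=d(\alpha (t),\beta (t))$ is convex, and by the first variation formula its right derivative at $0$ equals $-\cos \angle _{p}(x,q)-\cos \angle _{q}(y,p)\geq 0$. A convex function with nonnegative right derivative at the left endpoint is nondecreasing, so $d(p,q)=h(0)\leq h(1)=d(x,y)$.

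To prove (2), equivariance is deduced from the uniqueness in (1): since $\gamma $ is an isometry with $\gamma C=C$, we have $d(\gamma x,\gamma p(x))=d(x,p(x))=d(x,C)=d(\gamma x,\gamma C)=d(\gamma x,C)$ while $\gamma p(x)\in C$, so $\gamma p(x)$ is the unique nearest point and $p(\gamma x)=\gamma p(x)$. The equality of minimal sets then combines equivariance with the distance-non-increasing property: for any $x\in X$, $d(p(x),\gamma p(x))=d(p(x),p(\gamma x))\leq d(x,\gamma x)$, whence $\inf_{c\in C}d(c,\gamma c)\leq \inf_{x\in X}d(x,\gamma x)=\tau (\gamma )$; the reverse inequality is trivial because $C\subseteq X$. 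Thus the displacement of $\gamma $ on $C$ has infimum $\tau (\gamma )$ as well, so a point of $C$ minimizes displacement over $C$ exactly when it minimizes it over $X$, which is the statement $\mathrm{Min}(\gamma |_{C})=\mathrm{Min}(\gamma )\cap C$.

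The routine parts are the existence/uniqueness of the projection and the equivariance, both of which are essentially formal once the \textrm{CN} inequality is in hand. I expect the main obstacle to be the distance-non-increasing property: it requires the correct handling of Alexandrov angles, namely proving the obtuse-angle condition through comparison triangles and then correctly invoking convexity of $t\mapsto d(\alpha (t),\beta (t))$ together with the sign of its one-sided derivative. Care must be taken to justify the first variation formula and the monotonicity of comparison angles in the \textrm{CAT(0)} setting, and to treat the degenerate configurations ($p=q$, or $x,y\in C$) separately, where the inequality holds trivially.
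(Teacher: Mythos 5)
Your proposal is correct in outline and, apart from one inverted inequality discussed below, it essentially expands the citations that constitute the paper's actual proof. The paper disposes of part (1) and of the non-expansiveness of $p$ by quoting Bridson--Haefliger II.2.4, proves equivariance exactly as you do --- the chain $d(\gamma x,\gamma p(x))=d(x,p(x))=d(x,C)=d(\gamma x,C)$ plus uniqueness of the nearest point --- and refers to the proof of II.6.2(4) in the same book for $\mathrm{Min}(\gamma |_{C})=\mathrm{Min}(\gamma )\cap C$. Your self-contained treatment buys two things: the CN-inequality proof of existence and uniqueness makes the roles of completeness and convexity of $C$ explicit, and your translation-length argument --- $\tau (\gamma |_{C})\leq \tau (\gamma )$ via $d(p(x),\gamma p(x))=d(p(x),p(\gamma x))\leq d(x,\gamma x)$, the reverse inequality being trivial since $C\subseteq X$ --- establishes the equality of minimal sets without ever assuming either set is non-empty. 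That last point is exactly what matters in this paper, where $\gamma $ may be parabolic, and your argument there is complete and correct.

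The one genuine error is in your obtuse-angle step. You correctly show that the comparison angle satisfies $\bar{\angle}_{p}(x,q)\geq \pi /2$, but you then write $\angle _{p}(x,q)\geq \bar{\angle}_{p}(x,q)$, which is backwards: the $\mathrm{CAT}(0)$ condition gives precisely $\angle _{p}(x,q)\leq \bar{\angle}_{p}(x,q)$, so the bound on the Alexandrov angle does not follow as written. The standard repair is to run your contradiction argument on every shrunken triangle $\Delta (x,p,q_{s})$ with $q_{s}\in \lbrack p,q]\subset C$ (convexity of $C$ keeps $q_{s}$ in $C$, so minimality of $d(x,p)$ still applies), giving $\bar{\angle}_{p}(x,q_{s})\geq \pi /2$ for all $s>0$; since in a $\mathrm{CAT}(0)$ space these comparison angles are monotone in $s$ and converge to the Alexandrov angle as $s\rightarrow 0$ (this is equivalent to the first variation formula you invoke later), the bound $\angle _{p}(x,q)\geq \pi /2$ follows. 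Two smaller points in the same step: with your affine parametrizations the right derivative carries speed factors, $h^{\prime }(0^{+})=-d(p,x)\cos \angle _{p}(x,q)-d(q,y)\cos \angle _{q}(y,p)$, which does not affect the sign; and the two-endpoint first variation formula in $\mathrm{CAT}(0)$ itself requires justification --- alternatively it can be avoided altogether by the Euclidean quadrilateral comparison used in Bridson--Haefliger, where a comparison quadrilateral with angles $\geq \pi /2$ at $p$ and $q$ forces $d(p,q)\leq d(x,y)$.
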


\begin{proof}
The first part (1) is \cite{bh} (II.2.4). The second part is in the proof
II.6.2(4) of \cite{bh}. Actually, we have that 
\begin{equation*}
d(\gamma x,\gamma px)=d(x,px)=d(x,C)=d(\gamma x,C)
\end{equation*}
and thus $\gamma px=p(\gamma x)$ by the uniqueness of projection points.
Therefore, $p\mathrm{Min}(X)=\mathrm{Min}(\gamma )\cap C=\mathrm{Min}(\gamma
|_{C}).$
\end{proof}

\begin{lemma}
\label{le1}Let $G=A\rtimes H$ be a semi-direct product of two groups $A$ and 
$H.$ Suppose that both $A$ and $H$ have non-empty fixed point sets $X^{A}$
and $X^{H}$. Then $G$ has non-empty fixed point set $X^{G}.$
\end{lemma}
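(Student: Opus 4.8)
The plan is to use the nearest-point projection onto the fixed-point set of the normal factor $A$, together with its equivariance under $H$. First I would record the standard fact that in a complete $\mathrm{CAT(0)}$ space the fixed-point set of any isometry is closed and convex, so that $X^{A}=\bigcap_{a\in A}\mathrm{Fix}(a)$ is itself a closed convex — hence complete $\mathrm{CAT(0)}$ — subspace; it is non-empty by hypothesis. Set $C:=X^{A}$.

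The key structural observation is that $A$ is normal in $G=A\rtimes H$, and this forces $C$ to be $H$-invariant. Indeed, for $h\in H$, $x\in C$ and any $a\in A$ one has $a(hx)=h(h^{-1}ah)x=hx$, since $h^{-1}ah\in A$ fixes $x$; hence $hx\in C$. In particular every $h\in H$ restricts to an isometry of the convex complete subspace $C$.

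Now I would invoke Lemma \ref{proj}. Let $p:X\rightarrow C$ be the nearest-point projection from part (1). Since each $h\in H$ preserves $C$, part (2) applied with $\gamma=h$ gives $p(hx)=h\,p(x)$ for all $x\in X$, so $p$ is $H$-equivariant. Choose any $x_{0}\in X^{H}$ (non-empty by hypothesis) and put $y_{0}:=p(x_{0})\in C=X^{A}$. Then for every $h\in H$,
\[
h\,y_{0}=h\,p(x_{0})=p(h x_{0})=p(x_{0})=y_{0},
\]
so $y_{0}$ is fixed by $H$; and $y_{0}\in X^{A}$ means it is also fixed by $A$. As $A$ and $H$ together generate $G$, the point $y_{0}$ is fixed by all of $G$, whence $X^{G}\neq\emptyset$.

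I do not expect a genuine obstacle here, since the geometric content is already packaged in the equivariance statement of Lemma \ref{proj}; the only computation done by hand is the $H$-invariance of $X^{A}$, which is immediate from normality. The one point deserving care is that Lemma \ref{proj}(2) is phrased for a single $C$-invariant isometry, so I must apply it separately to each $h\in H$ (the projection $p$ being the same map for all of them) rather than to the whole group at once.
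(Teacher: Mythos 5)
Your proof is correct and follows essentially the same route as the paper: observe that normality of $A$ makes $X^{A}$ an $H$-invariant closed convex complete subspace, then project a point of $X^{H}$ onto $X^{A}$ via the $H$-equivariant nearest-point projection of Lemma \ref{proj}. Your write-up is if anything slightly more careful than the paper's (which compresses the last step to the assertion $p(X^{H})=X^{H}\cap X^{A}$), but the argument is the same.
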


\begin{proof}
For any $x\in X^{A}$ and any $h\in H,a\in A,$ we have that 
\begin{equation*}
ahx=h(h^{-1}ah)x=hx.
\end{equation*}
This shows that $hX^{A}=X^{A}.$ Moreover, the fixed point $X^{A}$ is a
convex closed complete subspace of $X$. Let $p:X\rightarrow X^{A}$ be the
projection as defined in Lemma \ref{proj}. Since $p(hx)=hp(x)$ for any $x\in
X,$ we have that $p(X^{H})=X^{H}\cap X^{A}=X^{G}\neq \emptyset .$
\end{proof}

\subsection{Matrix groups\label{elem}}

In this subsection, we briefly recall the definitions of the elementary
subgroups $E_{n}(R)$ of the general linear group $\mathrm{GL}_{n}(R)$. Let $%
R $ be an associative ring with identity and $n\geq 2$ be an integer. The
general linear group $\mathrm{GL}_{n}(R)$ is the group of all $n\times n$
invertible matrices with entries in $R$. For an element $r\in R$ and any
integers $i,j$ such that $1\leq i\neq j\leq n,$ denote by $e_{ij}(r)$ the
elementary $n\times n$ matrix with $1s$ in the diagonal positions and $r$ in
the $(i,j)$-th position and zeros elsewhere. The group $E_{n}(R)$ is
generated by all such $e_{ij}(r),$\textsl{\ i.e. }%
\begin{equation*}
E_{n}(R)=\langle e_{ij}(r)|1\leq i\neq j\leq n,r\in R\rangle .
\end{equation*}%
Let $D=\{\mathrm{diag}(\varepsilon _{1},\varepsilon _{2},\cdots ,\varepsilon
_{n})\mid \varepsilon _{i}=\pm 1\}$ be the diagonal subgroup. The extended
elementary group $E_{n}^{\prime }(R)$ is defined as $E_{n}(R),$ when $n$ is
odd and $\langle E_{n}(R),D\rangle <\mathrm{GL}_{n}(R)$ when $n$ is even.
Note that%
\begin{equation*}
\begin{pmatrix}
-1 &  \\ 
& -1%
\end{pmatrix}%
=%
\begin{pmatrix}
1 & 1 \\ 
& 1%
\end{pmatrix}%
\begin{pmatrix}
1 &  \\ 
-1 & 1%
\end{pmatrix}%
\begin{pmatrix}
1 & 2 \\ 
& 1%
\end{pmatrix}%
\begin{pmatrix}
1 &  \\ 
-1 & 1%
\end{pmatrix}%
\begin{pmatrix}
1 & 1 \\ 
& 1%
\end{pmatrix}%
.
\end{equation*}%
This implies that $\langle E_{n}(R),D\rangle $ contains $E_{n}(R)$ as an
index-2 subgroup. When $R=\mathbb{Z}$ and $n$ is even, we have $%
E_{n}^{\prime }(\mathbb{Z})=\mathrm{GL}_{n}(\mathbb{Z}).$

Denote by $I_{n}$ the identity matrix and by $[a,b]$ the commutator $%
aba^{-1}b^{-1}.$ The following lemma displays the commutator formulas for $%
E_{n}(R)$ (cf. Lemma 9.4 in \cite{mag}).

\begin{lemma}
\label{ecom}Let $R$ be a ring and $r,s\in R.$ Then for distinct integers $%
i,j,k,l$ with $1\leq i,j,k,l\leq n,$ the following hold:

\begin{enumerate}
\item[(1)] $e_{ij}(r+s)=e_{ij}(r)e_{ij}(s);$

\item[(2)] $[e_{ij}(r),e_{jk}(s)]=e_{ik}(rs);$

\item[(3)] $[e_{ij}(r),e_{kl}(s)]=I_{n}.$
\end{enumerate}
\end{lemma}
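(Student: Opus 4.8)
The plan is to reduce all three identities to a single multiplication rule for matrix units. Writing $E_{ij}$ for the $n\times n$ matrix with $1$ in the $(i,j)$ entry and zeros elsewhere, every elementary matrix factors as $e_{ij}(r)=I_{n}+rE_{ij}$, and the only fact I need about these units is the product law $E_{ij}E_{kl}=\delta_{jk}E_{il}$, where $\delta_{jk}$ equals $1$ when $j=k$ and $0$ otherwise. Because $R$ is only associative and not necessarily commutative, I will track scalars through matrix multiplication in the correct order: the $(i,l)$ entry of $(rE_{ij})(sE_{kl})$ is $\delta_{jk}\,rs$, with $r$ to the left, so that surviving products read $rs$ rather than $sr$.

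First I would prove (1) by direct expansion: $(I_{n}+rE_{ij})(I_{n}+sE_{ij})=I_{n}+(r+s)E_{ij}+rs\,E_{ij}E_{ij}$, and since $i\neq j$ the product law gives $E_{ij}E_{ij}=\delta_{ji}E_{ij}=0$, leaving $e_{ij}(r+s)$. Setting $s=-r$ in (1) yields $e_{ij}(r)e_{ij}(-r)=e_{ij}(0)=I_{n}$, hence $e_{ij}(r)^{-1}=e_{ij}(-r)$; I will use this to rewrite the commutators in (2) and (3) as honest products of four elementary matrices before expanding.

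For (2) I would expand $[e_{ij}(r),e_{jk}(s)]=(I_{n}+rE_{ij})(I_{n}+sE_{jk})(I_{n}-rE_{ij})(I_{n}-sE_{jk})$ and keep only the surviving matrix-unit products. Under the distinctness hypothesis the one nonzero mixed product is $E_{ij}E_{jk}=E_{ik}$; every other cross term carries a factor $E_{ab}E_{cd}$ with $b\neq c$ and so vanishes (for instance $E_{jk}E_{ij}=\delta_{ki}E_{jj}=0$ since $k\neq i$, and $E_{ij}E_{ij}=E_{jk}E_{jk}=0$). Collecting the terms that remain gives $I_{n}+rs\,E_{ik}=e_{ik}(rs)$, the factor appearing in the order $rs$ precisely because $r$ sits to the left in $E_{ij}E_{jk}$.

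Identity (3) is the easiest: when $i,j,k,l$ are distinct, both relevant orderings vanish, namely $E_{ij}E_{kl}=\delta_{jk}E_{il}=0$ (as $j\neq k$) and $E_{kl}E_{ij}=\delta_{li}E_{kj}=0$ (as $l\neq i$), so $e_{ij}(r)$ and $e_{kl}(s)$ commute and their commutator is $I_{n}$. None of these steps presents a genuine obstacle; the lemma is a bookkeeping exercise in the matrix-unit algebra. The only point demanding vigilance is the noncommutativity of $R$, which forces me to respect the left-to-right placement of scalars so that (2) produces $rs$ and not $sr$. Throughout I assume only the definitions of $E_{n}(R)$ and $e_{ij}(r)$ recorded above.
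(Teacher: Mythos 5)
Your proof is correct: the factorization $e_{ij}(r)=I_{n}+rE_{ij}$, the product rule $E_{ij}E_{kl}=\delta_{jk}E_{il}$, the inverse formula $e_{ij}(r)^{-1}=e_{ij}(-r)$ obtained from (1), and the careful left-to-right placement of scalars (so that (2) yields $rs$ rather than $sr$ over a noncommutative ring) together give a complete verification of all three identities. The paper itself offers no proof, citing Lemma 9.4 of Magurn's book instead, and your matrix-unit computation is precisely the standard argument behind that citation, so there is nothing to reconcile between the two.
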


By Lemma \ref{ecom}, the group $E_{n}(R)$ $(n\geq 3)$ is finitely generated
when the ring $R$ is finitely generated.

\section{Basic facts on the relative property $\mathcal{FA}_{n}$}

\begin{definition}
Let $H$ be a subgroup of a group $G.$ The pair $(G,H)$ has the relative
property $\mathcal{FA}_{n}$ (resp. $s\mathcal{FA}_{n}$) if whenever $G$ acts
isometrically (resp. semi-simply) on a complete $n$-dimensional $\mathrm{CAT}%
(0)$ space $X,$ the subgroup $H$ has a fixed point.
\end{definition}

We call that $G$ has property $\mathcal{FA}_{n}$ if $(G,G)$ has the relative
property $\mathcal{FA}_{n}.$ The following are some basic properties.

\begin{lemma}
\label{basic}(1) For groups $H<K<G,$ if $(K,H)$ has $\mathcal{FA}_{n}$
(resp. $s\mathcal{FA}_{n}$), then so does $(G,H).$ If $(G,K)$ has $\mathcal{%
FA}_{n}$ (resp. $s\mathcal{FA}_{n}$), then so does $(G,H).$

(2) Let $H_{2}<H_{1}$ be two subgroups of $G.$ If $H_{1}$ is bounded
generated by finitely many conjugates of $H_{2},$ then $(G,H_{1})$ has $%
\mathcal{FA}_{n}$ (resp. $s\mathcal{FA}_{n}$) if and only if $(G,H_{2})$ has 
$\mathcal{FA}_{n}$ (resp. $s\mathcal{FA}_{n}$).

(3) If $H$ is of finite index in $G,$ then $(G,H)$ has $\mathcal{FA}_{n}$
(resp. $s\mathcal{FA}_{n}$) if and only if $G$ has $\mathcal{FA}_{n}$ (resp. 
$s\mathcal{FA}_{n}$).

(4) If $(G,H)$ has the relative property $\mathcal{FA}_{n+1}$ (resp. $s%
\mathcal{FA}_{n}$), then $(G,H)$ has the relative property $\mathcal{FA}_{n}$
(resp. $s\mathcal{FA}_{n}$).
\end{lemma}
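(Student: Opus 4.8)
The plan is to prove the four parts of Lemma~\ref{basic} directly from the definitions, since each is essentially a monotonicity or restriction statement about fixed-point properties. Throughout I will repeatedly use the elementary fact (already exploited in Lemma~\ref{le1}) that if a group acts on a complete $\mathrm{CAT}(0)$ space and some subgroup has a non-empty fixed-point set, that fixed-point set is a convex, complete (hence again $\mathrm{CAT}(0)$) subspace, on which the normalizer acts; and that the nearest-point projection of Lemma~\ref{proj} is equivariant. The arguments are all short, so the real content is bookkeeping about which group acts and which subgroup is required to fix a point.

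For part~(1), suppose $(K,H)$ has $\mathcal{FA}_n$ and let $G$ act isometrically on an $n$-dimensional complete $\mathrm{CAT}(0)$ space $X$. Restricting the action to $K$ gives an isometric action of $K$ on the same $X$, so by hypothesis $H$ has a fixed point; hence $(G,H)$ has $\mathcal{FA}_n$. For the second assertion, if $(G,K)$ has $\mathcal{FA}_n$ and $G$ acts on $X$, then $X^{K}\neq\emptyset$, and since $H<K$ we have $X^{K}\subseteq X^{H}$, so $X^{H}\neq\emptyset$. The semisimple versions are identical because restricting an action to a subgroup does not change whether the ambient isometries are semisimple. Part~(4) is the same kind of triviality in the dimension variable: an $n$-dimensional complete $\mathrm{CAT}(0)$ space is in particular an $(n+1)$-dimensional one (the dimension bound in the definition is an upper bound), so $\mathcal{FA}_{n+1}$ immediately implies $\mathcal{FA}_n$, and likewise for $s\mathcal{FA}$.

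Part~(3) I would deduce from part~(2), since a finite-index subgroup furnishes the cleanest instance of bounded generation by conjugates. One direction is immediate from part~(1) (taking $H<G$). For the converse, suppose $(G,H)$ has $\mathcal{FA}_n$ with $[G:H]<\infty$ and let $G$ act on $X$ with $X^{H}\neq\emptyset$; writing $G=\bigcup_i g_i H$ as a finite union of cosets, the conjugates $g_i H g_i^{-1}$ all fix the convex sets $g_i X^{H}$, and $G$ is generated (in one step, so boundedly) by these finitely many conjugates. Thus the finite-index case is subsumed by part~(2), and it suffices to prove part~(2) carefully.

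The main work, and the step I expect to be the only non-formal one, is part~(2). The ``only if'' direction is part~(1) applied to $H_2<H_1$. For the ``if'' direction, assume $(G,H_2)$ has $\mathcal{FA}_n$ and that $H_1$ is boundedly generated by finitely many conjugates, say $H_1=\langle g_1 H_2 g_1^{-1},\dots,g_m H_2 g_m^{-1}\rangle$ with every element of $H_1$ a product of a bounded number of elements from these conjugate subgroups. Given an action of $G$ on $X$ with $X^{H_2}\neq\emptyset$, each conjugate $g_i H_2 g_i^{-1}$ fixes the non-empty convex complete set $g_i X^{H_2}$. The point to establish is that all these convex sets share a common point, for then that common point is fixed by $H_1$. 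Here is where I would invoke Helly's theorem for convex subsets of a finite-dimensional complete $\mathrm{CAT}(0)$ space (the tool the introduction flags as central): in dimension $n$, a finite family of convex sets with empty total intersection must already have an empty intersection among at most $n+1$ of them, so if I can rule out empty intersections of small subfamilies I get a global common fixed point. The obstacle is precisely that pairwise (or $(n{+}1)$-wise) intersections of the translated fixed-point sets $g_i X^{H_2}$ need not be forced to be non-empty by bounded generation alone, so I expect to need the bounded-generation hypothesis to produce, via the commutation/projection structure, a single $H_1$-invariant non-empty convex set directly rather than through a naive Helly count; concretely, I would use equivariance of projection (Lemma~\ref{proj}) together with the bounded word length to show the $H_1$-orbit of a point in $X^{H_2}$ is bounded, whence $H_1$ fixes its circumcenter. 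This last bounded-orbit argument is the crux, and everything else is formal.
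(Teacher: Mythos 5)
Your parts (1) and (2) are correct and follow the paper's own route: (1) is formal, and for (2) the decisive step is exactly the one you land on at the end, namely that if $x\in X^{H_2}$ then each conjugate $K_i=g_iH_2g_i^{-1}$ fixes $g_ix$, so $d(x,kx)\le 2d(x,g_ix)$ for all $k\in K_i$, and the bounded word length in $\bigcup_iK_i$ makes the orbit $H_1x$ bounded; the circumcentre (Bruhat--Tits) then gives a fixed point of $H_1$. The Helly detour you describe is unnecessary, and you correctly abandon it. However, two of the remaining parts contain genuine errors. In (3) you claim that $G$ is (boundedly) generated by the finitely many conjugates $g_iHg_i^{-1}$ of a finite-index subgroup $H$. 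This is false: if $H$ is normal of index $2$ (e.g.\ $H=A_3<S_3$, or $H=\{1\}<\mathbb{Z}/2$), then all conjugates equal $H$ and they generate only $H$, not $G$. So (3) does not reduce to (2). The correct argument is the direct one the paper gives: for $x\in X^{H}$ the orbit $Gx=\{g_1x,\dots,g_mx\}$ is finite, hence bounded, and $G$ fixes its circumcentre.

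For (4), your justification that an $n$-dimensional complete $\mathrm{CAT}(0)$ space ``is in particular an $(n+1)$-dimensional one'' is not valid: the dimension in the definition is the covering dimension, which is a single integer rather than an upper bound, so an $n$-dimensional space is not an $(n+1)$-dimensional space. The paper's fix is to let $G$ act on $X\times\mathbb{R}$ (trivially on the second factor), which is an $(n+1)$-dimensional complete $\mathrm{CAT}(0)$ space; a fixed point of $H$ there projects to a fixed point in $X$. (If one insisted on reading ``$n$-dimensional'' as ``of dimension at most $n$'' your one-line argument would be fine, but the paper's explicit $X\times\mathbb{R}$ construction indicates that is not the intended reading.) Both gaps are easily repaired, but as written the proofs of (3) and (4) would fail.
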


\begin{proof}
(1) is obvious from the definition.

For (2), let $H_{1}$ be bounded generated by $K_{1},K_{2},\cdots ,K_{k}$
(i.e. there exists $N>0$ such that each element $g\in G$ is a product of at
most $N$ elements in $\cup _{i=1}^{k}K_{i}$), where each $K_{i}$ is a
conjugation of $H_{2}.$ When $H_{2}$ has a fixed point $x$, each $K_{i}$ has
a fixed point. Since each orbit $H_{i}x$ is bounded, the orbit $H_{1}x$ is
bounded and thus $H_{1}$ has a fixed point. When $(G,H_{1})$ has $\mathcal{FA%
}_{n},$ it follows (1) that $(G,H_{2})$ has $\mathcal{FA}_{n}$.

For (3), when $H$ has a fixed point $x,$ the orbit $Gx$ is bounded and thus $%
G$ also has a fixed point.

For (4), when $G$ acts on a $n$-dimensional complete $\mathrm{CAT}(0)$ space 
$X$ by isometries, the group $G$ can also act on the product $X\times 
\mathbb{R}$ by trivial action on the second component. When $H$ has a fixed
point in $X\times \mathbb{R},$ it has also a fixed point in $X.$
\end{proof}

\begin{lemma}
Let $H$ be a nilpotent group subgroup of a finitely generated group $G.$
Suppose that each element in $H$ is distorted in $G.$ Then $(G,H)$ has the
relative property $s\mathcal{FA}_{n}$ for any $n.$
\end{lemma}

\begin{proof}
When $G$ acts on a complete $\mathrm{CAT}(0)$ space $X$ by semisimple
isometries, each element in $H$ is either hyperbolic or elliptic. Suppose
that some element $h\in H$ is hyperbolic and thus has a translation axis $l$%
. Fix a finite generating set $S<G$ and denote by $l_{S}$ the word length
function. For $x\in l,$ the translation 
\begin{eqnarray*}
|h| &=&d(x,hx)=\lim_{n\rightarrow \infty }\frac{d(x,h^{n}x)}{n} \\
&\leq &\lim_{n\rightarrow \infty }\frac{l_{s}(h^{n})Max\{d(x,sx):s\in S\}}{n}%
=0,
\end{eqnarray*}%
which is impossible. This shows that each element in $H$ is elliptic. Since $%
H$ is nilpotent, there is an upper central series%
\begin{equation*}
\{1\}=Z_{0}\trianglelefteq Z_{1}\trianglelefteq \cdots \trianglelefteq
Z_{n}=H
\end{equation*}%
such that each $Z_{i+1}/Z_{i}\ $is the center of $G/Z_{i}.$ Note that $Z_{1}$
is abelian. Any finitely generated subgroup $K$ of $Z_{1}$ has a fixed point
by Lemma \ref{le1}. Since the countable abelian group $Z_{1}$ is a direct
limit of finitely generated subgroups, it also has a fixed point.
Inductively suppose that $Z_{i}$ has a fixed point. The quotient (abelian)
group $Z_{i+1}/Z_{i}$ acts invariantly on the fixed point set $\mathrm{Fix}%
(Z_{i})$ and also thus has a fixed point. This proves that $Z_{i+1}$ and
thus $H$ has a fixed point.
\end{proof}

\begin{remark}
The previous lemma does not hold for $\mathcal{FA}_{n}.$ For example, the
subgroup $\langle e_{12}(1)\rangle \cong \mathbb{Z}$ generated by the $%
n\times n$ matrix with $1$ in the $(1,1)$-th position and the diagonals, $0$%
s elsewhere, is distorted in $\mathrm{SL}_{n}(\mathbb{Z})$ $(n\geq 3).$ But $%
\mathrm{SL}_{n}(\mathbb{Z})$ acts properly on the symmetric space $\mathrm{SL%
}_{n}(\mathbb{R})/SO(n).$
\end{remark}

\section{Helly's theorem and Coxeter groups}

The classical Helly's theorem says that for $m$ non-empty convex subsets $%
\{F_{i}\}_{i=1}^{m}$ $(m\geq n+2)$ in the Euclidean space $\mathbb{R}^{n}$
satisfying the intersection of any $n+1$ sets $\cap _{j=1}^{n}F_{i_{j}}\neq
\emptyset ,$ the whole intersection $\cap _{i=1}^{m}F_{i}\neq \emptyset .$
The following is the Helly theorem for $\mathrm{CAT}(0)$ spaces (cf. Bridson 
\cite{brid}, Theorem 3.2; Varghese \cite{var}, Theorem 5.10.)

\begin{lemma}
\label{helly}(the Helly theorem) Let $X$ be a $d$-dimensional (i.e. covering
dimension) complete $\mathrm{CAT}(0)$ space and $S$ a finite family of
non-empty closed convex subspaces. If the intersection of each $(d+1)$%
-elements of $S$ is non-empty, then $\cap S$ is non-empty.
\end{lemma}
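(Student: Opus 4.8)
The plan is to deduce the statement from the nerve of the family $S$ together with the interaction between the covering dimension of $X$ and its contractibility. The only specifically $\mathrm{CAT}(0)$ input I need is that every nonempty intersection of members of $S$ is again closed and convex, hence contractible: fixing a point $p$ in such an intersection $C$, the homotopy that slides each point of $C$ to $p$ along the unique geodesic is continuous (the $\mathrm{CAT}(0)$ inequality makes geodesics vary $1$-Lipschitzly with their endpoints) and stays inside the convex set $C$, giving a deformation retraction of $C$ to $p$; see \cite{bh}. Thus $S$ is a good cover of its union $Y=\bigcup_{F\in S}F$, i.e. each partial intersection is empty or acyclic. The cases $|S|\leq d+1$ hold by the hypothesis (for $|S|=d+1$ the conclusion is literally the hypothesis), so I assume $|S|\geq d+2$.

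Next I would pass to the nerve $N$ of $S$, whose simplices are the subfamilies with nonempty intersection. By Leray's acyclic-cover theorem (the \v{C}ech-cohomology form of the nerve lemma, valid for finite closed covers with acyclic intersections), $\check H^{\ast}(Y_{\mathcal G})\cong\check H^{\ast}(N_{\mathcal G})$ for every subfamily $\mathcal G\subseteq S$, where $N_{\mathcal G}$ is the induced subcomplex and $Y_{\mathcal G}=\bigcup_{F\in\mathcal G}F$. The hypothesis that every $d+1$ members meet says exactly that every subfamily of size at most $d+1$ is a simplex of $N$ (for a subfamily of size $<d+1$, enlarge it to size $d+1$; its intersection contains the nonempty one of the larger family), while the desired conclusion says the top simplex lies in $N$. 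Suppose it does not; choose $\mathcal G\subseteq S$ minimal with $\bigcap\mathcal G=\emptyset$. Then every proper subfamily of $\mathcal G$ meets but $\mathcal G$ does not, so $N_{\mathcal G}=\partial\Delta^{m-1}\cong S^{m-2}$ with $m=|\mathcal G|$, and the previous sentence forces $m\geq d+2$. Hence $\check H^{m-2}(Y_{\mathcal G};\mathbb Z)\cong\check H^{m-2}(S^{m-2};\mathbb Z)\neq 0$ with $m-2\geq d$.

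It remains to contradict $\check H^{k}(Y_{\mathcal G})\neq 0$ for some $k\geq d$, where $Y_{\mathcal G}$ is a closed subset of the $d$-dimensional space $X$; this sharp degree bound is the step I expect to be the crux. Covering dimension $\dim X\leq d$ gives only $\check H^{k}(Y_{\mathcal G})=0$ for $k>d$, which disposes of $m\geq d+3$ but not the critical case $m=d+2$ (dimension alone cannot forbid $\check H^{d}$, as $S^{d}$ shows). The resolution is to use that $X$ is not merely $d$-dimensional but contractible. In the long exact sequence of the pair $(X,Y_{\mathcal G})$ in \v{C}ech cohomology, $\check H^{k}(X)=\check H^{k+1}(X)=0$ for $k\geq 1$ since $X$ is contractible, so $\check H^{k}(Y_{\mathcal G})\cong\check H^{k+1}(X,Y_{\mathcal G})$; and since $\dim X\leq d$, cohomological dimension theory gives $\check H^{j}(X,A;\mathbb Z)=0$ for every closed $A\subseteq X$ and every $j>d$, whence $\check H^{k+1}(X,Y_{\mathcal G})=0$ for $k\geq d$. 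Therefore $\check H^{k}(Y_{\mathcal G})=0$ for all $k\geq d$, contradicting the computation above. Hence no minimal empty subfamily exists and $\bigcap S\neq\emptyset$, as required.
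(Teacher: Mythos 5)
The paper does not actually prove this lemma; it quotes it from Bridson (Theorem 3.2) and Varghese (Theorem 5.10), and the argument in those sources (going back to Farb's ``Group actions and Helly's theorem'') is exactly the one you give: pass to the nerve of a minimal subfamily with empty intersection, identify it with $\partial\Delta^{m-1}\cong S^{m-2}$, and contradict the vanishing of the \v{C}ech cohomology of closed subsets of a contractible $d$-dimensional space in degrees $\geq d$. Your proof is correct: the two nontrivial external inputs --- the Leray/nerve isomorphism for a finite closed cover with \v{C}ech-acyclic intersections, and the fact that $\dim X\leq d$ forces $\check H^{j}(X,A;\mathbb{Z})=0$ for all $j>d$ and all closed $A$ --- are both standard for paracompact (hence metric) spaces, and you correctly identify that it is the contractibility of $X$, not its dimension alone, that kills the borderline degree $j=d$. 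The only loose end is the case $d=0$, where your long exact sequence step as written needs $k\geq 1$; but a $0$-dimensional geodesic space is a single point, so that case is vacuous.
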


\begin{corollary}
(\cite{var}, 5.1.) Let $G$ be a group, $Y$ a finite generating set of $G$
and $X$ $d$-dimensional complete $\mathrm{CAT}(0)$ space. Let $\Phi
:G\rightarrow \mathrm{Isom}(X)$ be a homomorphism. If each $(d+1)$-element
subset of $Y$ has a fixed point in $X$, then $G$ has a fixed point in $X$.
\end{corollary}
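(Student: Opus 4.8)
The plan is to deduce this statement immediately from the Helly theorem for $\mathrm{CAT}(0)$ spaces (Lemma \ref{helly}), applied to the family of fixed-point sets of the generators. Write $Y=\{y_{1},\dots ,y_{m}\}$ and, for each $i$, set $F_{i}=\mathrm{Fix}(\Phi (y_{i}))=\{x\in X:\Phi (y_{i})x=x\}$. Since $Y$ generates $G$, a point is fixed by all of $G$ if and only if it lies in $\bigcap_{i=1}^{m}F_{i}$; hence it suffices to prove that this intersection is non-empty.

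First I would record that each $F_{i}$ is a non-empty closed convex subset of $X$. Closedness and convexity are the standard facts that the displacement function $x\mapsto d(x,\Phi (y_{i})x)$ is continuous and convex on a $\mathrm{CAT}(0)$ space, so its zero-level set $F_{i}$ is closed and convex; equivalently $F_{i}=\mathrm{Minset}(\Phi (y_{i}))$ when $\tau (\Phi (y_{i}))=0$. Non-emptiness of each $F_{i}$, and more generally of every intersection $\bigcap_{i\in S}F_{i}$ with $|S|\leq d+1$, is supplied by the hypothesis, since a common fixed point of a subset of $Y$ is by definition a point of the corresponding intersection.

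With this in place the argument is a direct invocation of Lemma \ref{helly}. The hypothesis says exactly that the intersection of every $(d+1)$-element subfamily of the finite family $S=\{F_{1},\dots ,F_{m}\}$ of non-empty closed convex sets is non-empty. The Helly theorem then yields $\bigcap_{i=1}^{m}F_{i}\neq \emptyset$, and any point of this intersection is fixed by each generator, hence by all of $G$.

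The only step needing care is the degenerate range $m\leq d$, where there are no $(d+1)$-element subsets and the stated hypothesis is vacuous; it cannot by itself force a fixed point, as a single hyperbolic isometry of $X=\mathbb{R}^{d}$ shows. This indicates that the hypothesis is to be read as asserting that every subset of $Y$ of size \emph{at most} $d+1$ has a fixed point. Under that reading the case $m\leq d+1$ is immediate by taking the whole set $Y$ as the relevant subset, while the case $m>d+1$ is exactly the Helly application above. Apart from this bookkeeping there is no genuine obstacle: the entire content of the corollary is the passage from $(d+1)$-wise non-empty intersections to a globally non-empty intersection, which is precisely what Lemma \ref{helly} provides once the fixed-point sets are known to be closed and convex.
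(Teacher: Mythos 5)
Your proof is correct and is exactly the intended argument: the paper gives no proof of its own (it cites Varghese, Theorem 5.1) and places the corollary immediately after Lemma \ref{helly}, so the expected reasoning is precisely your application of the Helly theorem to the non-empty closed convex fixed-point sets $\mathrm{Fix}(\Phi(y_i))$. Your remark about the degenerate case $|Y|\leq d$, where the hypothesis must be read as ``every subset of size at most $d+1$ has a common fixed point,'' is a legitimate and worthwhile clarification of the statement, not a gap in your argument.
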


The following was firstly proved by Bridson \cite{brid} (Proposition 3.4):

\begin{lemma}
\label{3.4}Let $X^{d}$ be a complete $\mathrm{CAT}(0)$ space and $G$ a group
acting on $X$ by isometries. Suppose that there are integers $%
k_{1},k_{2},...,k_{m}$ and subset $S_{i},i=1,2,...,m$ satisfying

(1) every $k_{i}$-element of $S_{i}$ has a common fixed point;

(2) $[s_{i},s_{j}]=1$ for any $s_{i}\in S_{i},s_{j}\in S_{j},i\neq j;$

(3) $d<k_{1}+k_{2}+\cdots +k_{m}.$

Then for some $i$ every finitely many elements in $S_{i}$ have a common
fixed point.
\end{lemma}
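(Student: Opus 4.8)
The plan is to argue by contradiction using Helly's theorem (Lemma~\ref{helly}). Suppose, for contradiction, that no $S_i$ has the property that all of its finite subsets share a common fixed point. Then for each $i$ there is a finite subset $T_i \subseteq S_i$ whose elements have \emph{no} common fixed point. By hypothesis~(1), however, every $k_i$-element subset of $S_i$ (and hence of $T_i$) does have a common fixed point. For each element $s \in \bigcup_i T_i$, let $\mathrm{Fix}(s) \subseteq X$ denote its fixed-point set, which is a non-empty, closed, convex subspace (it is the minimal set of an elliptic isometry, and such sets are convex in a $\mathrm{CAT}(0)$ space). The key objects are these convex sets and their intersections.

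\textbf{The counting argument.}
First I would use Helly's theorem to convert the $k_i$-wise fixed-point property into a stronger statement about the full subset $T_i$. For the family $\{\mathrm{Fix}(s) : s \in T_i\}$ of closed convex subsets, the hypothesis says every $k_i$-element subfamily has non-empty intersection. If $k_i \geq d+1$, then Helly's theorem (applied to any $(d+1)$-element subfamily inside a $k_i$-element one) immediately yields $\bigcap_{s \in T_i} \mathrm{Fix}(s) \neq \emptyset$, contradicting the choice of $T_i$; so we may assume each $k_i \leq d$. More carefully, I expect the argument to produce, for each $i$, a convex subspace $C_i$ of ``codimension at most $k_i - 1$'' in a suitable sense — the idea being that the failure of $T_i$ to have a common fixed point, combined with the $k_i$-wise intersection property, forces the relevant intersections to carve out at least $k_i$ independent directions.

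\textbf{Combining the families via commutativity.}
The commutativity hypothesis~(2) is what lets the different families interact geometrically. Since every element of $S_i$ commutes with every element of $S_j$ for $i \neq j$, each group $\langle S_j : j \neq i\rangle$ preserves the fixed-point sets of elements of $S_i$, and conversely $S_i$ acts on the intersection $\bigcap_{j \neq i}(\text{fixed sets of } T_j)$. The plan is to build a single closed convex subspace by intersecting contributions from all $m$ families and track how the dimension drops. Using $\mathrm{CAT}(0)$ geometry — specifically that a product-like splitting is induced by commuting elliptic subgroups whose fixed sets meet transversally — one accumulates a total codimension of at least $k_1 + k_2 + \cdots + k_m$. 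But hypothesis~(3) says $d < k_1 + \cdots + k_m$, so this codimension exceeds the dimension $d$ of $X$, which is impossible. This contradiction establishes the lemma.

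\textbf{The main obstacle.}
The hard part will be making the ``codimension adds up'' step rigorous, since $X$ is a general complete $\mathrm{CAT}(0)$ space with no linear structure, so ``codimension'' and ``transversality'' must be replaced by honest arguments about covering dimension and the geometry of intersecting convex sets. I expect the cleanest route is an inductive dimension-counting argument: one shows that if $T_i$ has no common fixed point but every $k_i$-subset does, then within any convex $G$-invariant set the fixed sets of $T_i$ can only all meet after passing to a subspace whose dimension is reduced by at least $k_i$, and then feed the commuting actions of the other families into this subspace one at a time. Controlling the dimension bookkeeping across the $m$ commuting families, rather than any single geometric estimate, is where the real work lies.
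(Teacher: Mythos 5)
The paper does not actually prove this lemma; it quotes it from Bridson, where it is established by a nerve-theoretic argument. Your proposal gets the framing right — argue by contradiction, record that each $\mathrm{Fix}(s)$ is a non-empty closed convex set, observe that commuting elements preserve one another's fixed-point sets — but the step you yourself flag as ``the main obstacle'' is a genuine gap, and the mechanism you propose to fill it does not work. The claim that the failure of $T_i$ to have a common fixed point ``forces the relevant intersections to carve out at least $k_i$ independent directions,'' i.e.\ that codimensions of the fixed sets accumulate under intersection, is false already in $\mathbb{R}^2$: the three thickened edges of a triangle are closed convex sets, any two of which meet in a two-dimensional region while all three have empty intersection, so the members of a minimal family with empty total intersection need not lose dimension at all when intersected. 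There is no transversality or product splitting available for fixed-point sets of elliptic isometries of a general $\mathrm{CAT}(0)$ space, so no amount of bookkeeping will make ``codimension adds up'' rigorous.

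The actual proof replaces dimension counting by a homological join argument. For each $i$ one chooses a \emph{minimal} finite subset $B_i\subseteq S_i$ with no common fixed point, so that $|B_i|=l_i+1$ with $l_i\geq k_i$ and the nerve of $\{\mathrm{Fix}(b):b\in B_i\}$ is the boundary of a simplex, $\partial\Delta^{l_i}\cong S^{l_i-1}$; this minimality is essential and is absent from your proposal. The commutation hypothesis together with the equivariant projection onto an invariant convex subspace (Lemma~\ref{proj}) shows that whenever $A_i\subsetneq B_i$ for every $i$, one has $\bigcap_i\bigcap_{b\in A_i}\mathrm{Fix}(b)\neq\emptyset$, so the nerve of the combined family $\{\mathrm{Fix}(b):b\in B_1\cup\cdots\cup B_m\}$ is exactly the join $\partial\Delta^{l_1}\ast\cdots\ast\partial\Delta^{l_m}\cong S^{l_1+\cdots+l_m-1}$, a sphere of dimension at least $k_1+\cdots+k_m-1\geq d$. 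The form of Helly's theorem underlying Lemma~\ref{helly} — that the nerve of a finite family of non-empty closed convex subsets of a complete $d$-dimensional $\mathrm{CAT}(0)$ space cannot realize such a sphere — then gives the contradiction. Your argument would need to be rebuilt around this combinatorial structure rather than around codimension.
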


\subsection{The action of Coxeter groups}

A Coxeter group $(W,S)$ is a group $W$ with the presentation%
\begin{equation*}
\langle s_{1},s_{2},\cdots ,s_{n}\mid (s_{i}s_{j})^{m_{ij}}=1\rangle
\end{equation*}%
where $S=\{s_{1},s_{2},\cdots ,s_{n}\},$ $m_{ii}=1$ and $m_{ij}\geq 2$ for $%
i\neq j.$ It is possible that $m_{ij}=\infty ,$ which means no relations of
the form $(s_{i}s_{j})^{\infty }$. There is a canonical group homomorphism $%
\rho :W\rightarrow \mathbb{Z}/2$ defined by mapping each $s_{i}$ to the
generator $1\in \mathbb{Z}/2.$ The kernel is denoted by $SW=\ker \rho ,$
called the alternating Coxeter subgroup. For a subset $T\subseteq S$, the
subgroup $W_{T}<W$ generated by elements in $T$ is called a parabolic
subgroup (or a special subgroup in the literature). The Coxeter matrix is
the matrix $M=(m_{ij})_{n\times n}$ and the Schl\"{a}fli matrix is $C=(-\cos
(\frac{\pi }{m_{ij}}))_{n\times n}.$ The Coxeter graph of $(W,S)$ is a graph
defined as the following. The vertices of the graph are labelled by
generators in $S.$ Vertices $(s_{i},s_{j})$ are adjacent if and only if $%
m_{ij}\geq 3$. An edge is labelled with the value of $m_{ij}$ whenever the
value is $\geq 4.$ The following is well-known (cf. \cite{Davis}).

\begin{lemma}
\label{finite}The Coxeter group is finite if and only the eigenvalues of the
Schl\"{a}fli matrix are all positive. The finite Coxeter groups consist of
three one-parameter families of increasing rank $A_{n},B_{n},D_{n}$, one
one-parameter family of dimension two $I_{2}(p)$, and six exceptional
groups: $E_{6},E_{7},E_{8},F_{4},H_{3}$ and $H_{4}.$
\end{lemma}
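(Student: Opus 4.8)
The plan is to recover this classical statement from the \emph{geometric (Tits) representation} of $(W,S)$. Let $V=\bigoplus_{i=1}^{n}\mathbb{R}e_{i}$ and equip $V$ with the symmetric bilinear form $B$ whose Gram matrix in the basis $\{e_{i}\}$ is exactly the Schl\"{a}fli matrix $C$, i.e. $B(e_{i},e_{j})=-\cos(\pi/m_{ij})$ (so $B(e_{i},e_{i})=1$). Define $\sigma:W\to \mathrm{GL}(V)$ by letting each generator act as the reflection $\sigma(s_{i})(v)=v-2B(e_{i},v)e_{i}$. A direct check shows that each $\sigma(s_{i})$ preserves $B$ and is an involution, and that for finite $m_{ij}$ the element $\sigma(s_{i}s_{j})$ restricts to a rotation of order $m_{ij}$ on the plane $\mathbb{R}e_{i}+\mathbb{R}e_{j}$; hence $\sigma$ is a well-defined homomorphism, and by Tits' theorem it is faithful. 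The first assertion then reduces to: $W$ is finite if and only if $B$ (equivalently $C$) is positive definite.

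For the direction assuming $C$ positive definite, $B$ is a genuine inner product and $\sigma(W)\subseteq O(V,B)$, a compact group. Since $\sigma(W)$ is a discrete reflection group (the reflecting hyperplanes are locally finite and the Tits cone equals all of $V$, so $\sigma(W)$ acts properly), its image is a discrete subgroup of the compact group $O(V,B)$, hence finite; by faithfulness $W$ is finite. Conversely, if $W$ is finite, averaging an arbitrary inner product over $W$ produces a $W$-invariant positive definite form $\langle\cdot,\cdot\rangle$. Each $\sigma(s_{i})$ is then an orthogonal involution, i.e. a reflection in a hyperplane $H_{i}$ with unit normal $u_{i}$, and the relation $(s_{i}s_{j})^{m_{ij}}=1$ forces the dihedral angle between $H_{i}$ and $H_{j}$ to be $\pi/m_{ij}$, whence $\langle u_{i},u_{j}\rangle=-\cos(\pi/m_{ij})=C_{ij}$. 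Thus $C$ is the Gram matrix of the $u_{i}$ with respect to a positive definite form; since the $e_{i}$ form a basis and $\sigma$ is faithful the $u_{i}$ are linearly independent, so this Gram matrix is positive definite and $C>0$.

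It remains to identify which Coxeter graphs yield a positive definite $C$. First I would reduce to the connected case: $C$ block-decomposes along the connected components of the Coxeter graph, and $W$ is the direct product of the corresponding parabolic subgroups $W_{T}$, so $W$ is finite if and only if each component is of finite type. For a connected graph the enumeration rests on one elementary observation: a principal submatrix of a positive definite matrix is again positive definite, so for every $T\subseteq S$ the parabolic $W_{T}$ must itself be of finite type. One then computes the borderline \emph{affine} graphs $\tilde{A}_{n},\tilde{B}_{n},\tilde{D}_{n},\tilde{E}_{6},\tilde{E}_{7},\tilde{E}_{8},\tilde{F}_{4}$, whose Schl\"{a}fli matrices are positive \emph{semi}definite (determinant $0$), and forbids them as (full) subgraphs. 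This excludes cycles (forcing a tree), vertices of degree $\geq 4$ and more than one branch vertex (forcing at most one node of degree $3$), and constrains the edge labels $\geq 4$; the finite list of labelled trees surviving these constraints is precisely $A_{n},B_{n},D_{n},I_{2}(p),E_{6},E_{7},E_{8},F_{4},H_{3},H_{4}$.

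The conceptual content (the equivalence of the first sentence) is short once the Tits representation is in hand; the genuine labor is the combinatorial classification of the last paragraph, an intricate but entirely finite forbidden-subgraph analysis. Since this is the classical theorem of Coxeter, I would not reproduce every case but instead cite the standard treatment \cite{Davis}, recording the subgraph-exclusion mechanism above only as the engine of the argument.
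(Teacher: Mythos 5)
The paper gives no proof of this lemma at all: it is stated as ``well-known'' with a citation to Coxeter's 1935 enumeration, so there is no argument of the author's to compare yours against. Your outline is the standard textbook proof via the Tits (geometric) representation, and its overall structure is sound: faithfulness of $\sigma$, finiteness of a discrete subgroup of the compact group $O(V,B)$ when $C$ is positive definite, reduction to connected diagrams via the block decomposition of $C$, and exclusion of the affine diagrams as the engine of the enumeration.

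One step in your converse direction does not work as written. From $(s_{i}s_{j})^{m_{ij}}=1$ together with faithfulness you only get that $\sigma(s_{i}s_{j})$ is a rotation of order exactly $m_{ij}$, so the dihedral angle between $H_{i}$ and $H_{j}$ is $k\pi/m_{ij}$ for some $k$ coprime to $m_{ij}$; the Gram matrix of your chosen unit normals then has entries $\pm\cos(k\pi/m_{ij})$, which need not equal $-\cos(\pi/m_{ij})$ (already for $m_{ij}=5$ one could have $k=2$). Hence the positive definite Gram matrix you produce is not visibly the Schl\"{a}fli matrix $C$, and the conclusion $C>0$ does not follow from this computation. The standard repair bypasses the normals entirely: the form $B$ whose matrix is $C$ is itself $W$-invariant by construction; for an irreducible system the geometric representation is irreducible (its only proper invariant subspaces lie in the radical of $B$, which complete reducibility for finite $W$ forces to vanish), so by Schur's lemma $B$ is a scalar multiple of the averaged positive definite form, and the scalar is positive because $B(e_{i},e_{i})=1$. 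With that patch---or simply with the citation to the standard treatment that the paper itself relies on---your sketch is a faithful account of the classical argument being invoked.
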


\bigskip 
\begin{equation*}
\begin{array}{c}
\FRAME{itbpF}{4.2531in}{1.1018in}{0in}{}{}{Figure}{\special{language
"Scientific Word";type "GRAPHIC";maintain-aspect-ratio TRUE;display
"USEDEF";valid_file "T";width 4.2531in;height 1.1018in;depth
0in;original-width 9.5692in;original-height 2.4587in;cropleft "0";croptop
"1";cropright "1";cropbottom "0";tempfilename
'PR7VXG02.bmp';tempfile-properties "XPR";}} \\ 
\text{Figure 1: Coxeter graphs of the finite Coxeter groups}%
\end{array}%
\end{equation*}

\begin{definition}
Let $n>0$ be an integer. A Coxeter group $(W,S)$ is $n$-spherical if any $%
n+1 $ elements in $S$ generates a finite group.
\end{definition}

\begin{lemma}
\label{coxe}Let $X$ be a complete $\mathrm{CAT}(0)$ space of dimension $%
n\geq 1$. Let $(W,S)$ be an $n$-spherical Coxeter group. Then any action of $%
W$ on $X$ has a fixed point.
\end{lemma}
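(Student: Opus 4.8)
```latex
The plan is to prove Lemma \ref{coxe} by induction on the covering dimension
$n$, using the Helly theorem (Lemma \ref{helly}) together with the
classification of finite Coxeter groups (Lemma \ref{finite}). The key
reduction is this: since $(W,S)$ is $n$-spherical, every subset
$T\subseteq S$ with $|T|\leq n+1$ generates a \emph{finite} parabolic
subgroup $W_T$. A finite group acting by isometries on a complete
$\mathrm{CAT}(0)$ space always has a fixed point (its bounded orbits have a
circumcenter, the unique center of the smallest enclosing ball, which is
fixed by the whole group). Thus every such $W_T$ has a non-empty, closed,
convex fixed-point set $\mathrm{Fix}(W_T)\subseteq X$.

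First I would set up the application of Helly. Consider the family
$\mathcal{S}=\{\mathrm{Fix}(\langle s\rangle):s\in S\}$ of fixed-point sets
of the individual generators; each is a closed convex subspace of the
$n$-dimensional space $X$. By Helly (Lemma \ref{helly}), to show
$\bigcap_{s\in S}\mathrm{Fix}(\langle s\rangle)=\mathrm{Fix}(W)\neq\emptyset$
it suffices to check that every $(n+1)$-element subfamily has non-empty
intersection. But the intersection of $\mathrm{Fix}(\langle s\rangle)$ over
$s$ ranging in a set $T$ with $|T|\leq n+1$ is exactly $\mathrm{Fix}(W_T)$,
and $W_T$ is finite by $n$-sphericity, hence has a fixed point as observed
above. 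This handles the case where $S$ is finite directly, since then
$\mathcal{S}$ is a finite family and Helly applies.

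The main obstacle is that $S$ need not be finite, while Lemma \ref{helly}
is stated only for a \emph{finite} family of convex subsets. To handle an
infinite generating set I would argue by a direct-limit / exhaustion
argument. For any finite subset $S_0\subseteq S$, the above finite
application of Helly gives that $W_{S_0}$ has a non-empty fixed-point set,
and these fixed-point sets are closed convex subspaces that are nested and
decreasing as $S_0$ grows. One must then show the total intersection over
all finite $S_0$ is non-empty. This requires a compactness or boundedness
input: I would fix one vertex $x_0\in\mathrm{Fix}(W_{S_0^{(1)}})$ for some
initial finite set and use that the nested convex fixed-point sets, being
non-empty and contained in a fixed closed ball forced by the circumcenter
construction, have non-empty intersection by completeness of $X$. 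Concretely,
the fixed point of a finite subgroup lies within the closed ball of radius
$\mathrm{rad}(Wx_0)$ about any orbit, and a decreasing sequence of non-empty
closed convex sets whose circumradii do not blow up intersects in a complete
$\mathrm{CAT}(0)$ space. Assembling this limiting argument carefully is the
delicate step; the finite-dimensional Helly bound is what keeps the relevant
subfamilies finite so that each finite stage goes through cleanly.

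An alternative, and perhaps cleaner, route avoids the infinite family
entirely: reduce to a finitely generated subgroup. Any single element
$w\in W$ lies in $W_{T}$ for some finite $T\subseteq S$, so if one can show
$\mathrm{Fix}(W_T)$ is non-empty and, moreover, that all these fixed-point
sets share a common point, then $W$ fixes that point. I would therefore
emphasize the exhaustion argument of the previous paragraph as the core of
the proof, with the finite case handled verbatim by Helly plus the
circumcenter fixed-point property of finite groups, and the passage to
infinite $S$ handled by the nested-intersection argument using completeness.
```
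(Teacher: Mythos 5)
Your finite-$S$ argument is exactly the paper's proof: the fixed-point set of each generator is closed and convex, any $n+1$ of them intersect because the corresponding parabolic subgroup is finite by $n$-sphericity (hence has a fixed point by the Bruhat--Tits circumcenter argument), and the Helly theorem (Lemma \ref{helly}) finishes. The concern about infinite $S$ is moot, since the paper's Coxeter systems have a finite generating set $S=\{s_{1},\dots,s_{n}\}$ by definition; as you yourself suspect, the nested-convex-sets limit you sketch for that case would need a genuine boundedness input (unbounded nested closed convex sets can have empty intersection, and $\mathrm{rad}(Wx_{0})$ need not be finite for infinite $W$), but this does not affect the statement as intended.
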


\begin{proof}
By the assumption, the fixed point set $X^{s}$ is closed for each generator $%
s\in S$. The intersection $\cap
_{i=1}^{k}X^{s_{i}}=X^{W_{\{s_{1},s_{2},\cdots ,s_{k}\}}}$ is not empty for $%
k\leq n+1,$ since $W_{\{s_{1},s_{2},\cdots ,s_{k}\}}$ is finite. The Helly
theorem (cf. Lemma \ref{helly}) implies that the fixed point set $X^{W}$ is
not empty.
\end{proof}

\section{Actions of matrix groups on $\mathrm{CAT}(0)$ spaces}

In this section, we always suppose that $X$ is a complete $\mathrm{CAT}(0)$
space and $\mathrm{Isom}(X)$ is the group of isometries. Let $R$ be a
finitely generated ring and $E_{n}(R)$ (resp. $E_{n}^{\prime }(R)$) the
(resp. extended) elementary subgroup. In this section, we will prove Theorem %
\ref{1.1}.

\begin{lemma}
\label{lem1.2}Let $1\leq j\neq j\leq n.$ The elementary subgroup $E_{n}(R)$
has property $\mathcal{FA}_{n-2}$ if and only if $(E_{n}(R),e_{ij}(x))$ has
property $\mathcal{FA}_{n-2}$ for any $x\in R.$
\end{lemma}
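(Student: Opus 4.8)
The plan is to prove the two implications separately; the forward direction is immediate, so essentially all the content is in the converse.

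$(\Rightarrow)$ If $E_{n}(R)$ has property $\mathcal{FA}_{n-2}$, then in any isometric action on a complete $\mathrm{CAT}(0)$ space of dimension $\le n-2$ the whole group fixes a point, which is a fortiori fixed by the cyclic subgroup $\langle e_{ij}(x)\rangle$. Hence $(E_{n}(R),\langle e_{ij}(x)\rangle)$ has the relative property $\mathcal{FA}_{n-2}$ for every $x$, and nothing more is needed here.

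$(\Leftarrow)$ Assume $(E_{n}(R),\langle e_{ij}(x)\rangle)$ has $\mathcal{FA}_{n-2}$ for all $x$, and fix an isometric action of $E_{n}(R)$ on a complete $\mathrm{CAT}(0)$ space $X$ with $\dim X\le n-2$. The first step is to spread the hypothesis across all generators. The Weyl elements $w_{kl}=e_{kl}(1)e_{lk}(-1)e_{kl}(1)\in E_{n}(R)$ conjugate $e_{ij}(r)$ to $e_{\sigma(i)\sigma(j)}(\pm r)$, and since the symmetric group acts transitively on ordered pairs of distinct indices, every cyclic subgroup $\langle e_{kl}(x)\rangle$ is $E_{n}(R)$-conjugate to one of the form $\langle e_{ij}(x')\rangle$. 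As conjugate subgroups share the relative property (a point fixed by $\langle e_{ij}(x')\rangle$ is carried by the conjugating element to one fixed by $\langle e_{kl}(x)\rangle$), I conclude that under the given action every generator $e_{kl}(x)$ is elliptic.

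Next I would invoke Lemma \ref{3.4} with the $n-1$ last-column families $S_{i}=\{e_{in}(x):x\in R\}$, $i=1,\dots,n-1$, and weights $k_{i}=1$. A direct computation gives $[e_{in}(r),e_{jn}(s)]=I_{n}$ for $i\neq j$ (the two matrices are supported in distinct rows of the same column), so condition (2) holds; condition (1) is exactly the ellipticity just established; and condition (3) reads $\dim X\le n-2<n-1=\sum_i k_i$. Lemma \ref{3.4} then produces an index $i_{0}$ for which every finite subset of $S_{i_{0}}$ has a common fixed point. Since $\langle S_{i_{0}}\rangle\cong(R,+)$ is abelian and is the directed union of its finitely generated subgroups, each fixing a nonempty closed convex set, the column subgroup $U:=\langle e_{i_{0}n}(x):x\in R\rangle$ fixes a point; thus $Y:=\mathrm{Fix}(U)$ is a nonempty closed convex, hence complete $\mathrm{CAT}(0)$, subspace (compare the directed-union argument used for $Z_{1}$ in the nilpotent lemma, and Lemma \ref{proj} for the projection onto $Y$).

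The remaining step—passing from ``one column subgroup fixes a point'' to ``all of $E_{n}(R)$ fixes a point''—is where I expect the real difficulty to lie, and it genuinely requires the $\mathrm{CAT}(0)$ geometry: since $E_{n}(R)$ is perfect and non-solvable it cannot be assembled from cyclic pieces by iterating Lemma \ref{le1}, so a purely group-theoretic telescoping is unavailable. My plan is to exploit that the centralizer of $U$, generated by $\{e_{kl}(x):k\neq n,\ l\neq i_{0}\}$ and containing a block copy of $E_{n-1}(R)$, preserves $Y$ and acts on it; the single-generator hypothesis is inherited by this copy through the block embedding via Lemma \ref{basic}, so one can induct on $n$, fixing successively larger unipotent pieces on $Y$ and then a Levi factor, and reassembling a global fixed point through repeated use of Lemma \ref{le1} and the Helly machinery of Lemma \ref{helly}. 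The delicate point, and the main obstacle, is the dimension bookkeeping: fixing $U$ need not drop $\dim Y$ below $n-2$, so the induction cannot simply quote $\mathcal{FA}_{n-3}$ for $E_{n-1}(R)$; instead I expect to have to re-run the commuting-family argument of Lemma \ref{3.4} directly on $Y$ with families adapted to the stabilized column, and reconciling this with the one-unit dimension shift is the part that must be handled with care.
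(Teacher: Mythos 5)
Your forward direction and the conjugacy step are fine, but the argument is not complete: the passage from ``one column subgroup fixes a point'' to a global fixed point --- which you correctly identify as the crux --- is left as a sketch of an induction on $n$ whose central difficulty (the dimension bookkeeping on $Y=\mathrm{Fix}(U)$) you name but do not resolve. That obstacle is real: $\mathrm{Fix}(U)$ can have the same dimension as $X$, so no appeal to $\mathcal{FA}_{n-3}$ for the block copy of $E_{n-1}(R)$ is available, and your proposal supplies no substitute. The paper closes the argument without any induction, using two observations you are missing. First, each full root subgroup $\langle e_{ij}(x):x\in R\rangle$ is abelian and consists of elliptic elements, so by Lemma \ref{le1} (iterated over commuting elliptics, with a direct-limit/bounded-orbit argument for infinite generation) it fixes a point with \emph{no} dimension hypothesis; iterating Lemma \ref{le1} along the semidirect-product filtration of the unipotent upper-triangular subgroup $H=\langle e_{ij}(x)\mid 1\leq i<j\leq n\rangle$ then shows that all of $H$ fixes a point, again with no dimension hypothesis. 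Second, $E_{n}(R)$ is generated by the $n$ root subgroups $S=\{e_{12},e_{23},\dots,e_{n-1,n},e_{n,1}\}$ arranged in a cycle, and any $n-1$ of them omit one ``edge'' of the cycle and hence generate a conjugate of (a subgroup of) $H$, which has a fixed point. Since $d\leq n-2$, i.e.\ $d+1\leq n-1$, Helly's theorem (Lemma \ref{helly}) applied to the $n$ fixed-point sets of the members of $S$ produces a global fixed point.

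A secondary remark: your detour through Lemma \ref{3.4} with the last-column families $S_{i}=\{e_{in}(x):x\in R\}$ is both unnecessary and too weak for what follows. It only yields \emph{one} index $i_{0}$ whose column subgroup fixes a point, whereas the commutativity argument via Lemma \ref{le1} gives every root subgroup a fixed point for free, and it is the latter, uniform statement that feeds into the triangular-subgroup and Helly steps above.
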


\begin{proof}
For any $y\in R,$ the two matrices $e_{ij}(x)$ and $e_{ij}(y)$ commute with
each other. Lemma \ref{le1} implies that they have a common fixed point.
Therefore, the subgroup $e_{ij}:=\langle e_{ij}(x):x\in R\rangle $ has a
fixed point. Since all $e_{ij}$ $(i\neq j$) are conjugate, $e_{12}$ has a
fixed point. Apply Lemma \ref{le1} again to get that the subgroup $\langle
e_{1i}(x)\mid 2\leq i\leq n,x\in R\rangle $ has a fixed point and the upper
triangular subgroup $H=\langle e_{ij}(x)\mid 1\leq i<j\leq n,x\in R\rangle $
has a fixed point. Note that $E_{n}(R)$ is generated by $S=\{e_{12},e_{23},%
\cdots ,e_{n-1,n},e_{n,1}\}$ and any $n-1$ elements of $S$ generate a
subgroup isomorphic to $H.$ The Helly theorem implies that $E_{n}(R)$ has a
global fixed point.
\end{proof}

\bigskip

Denote by $\{a_{i}\}_{i=1}^{n}$ the standard basis of the finitely generated
free abelian group $\mathbb{Z}^{n},$ by $\{\sigma _{i}\}_{i=1}^{n}$ the
standard basis of the elementary abelian group $(\mathbb{Z}/2)^{n}.$ Let $G=%
\mathbb{Z}^{n}\rtimes ((\mathbb{Z}/2)^{n}\rtimes S_{n})$ be the semi-direct
product, where $\mathbb{Z}/2$ acts on $\mathbb{Z}$ by reflection and $S_{n}$
acts on $(\mathbb{Z}/2)^{n}$ and $\mathbb{Z}^{n}$ by permuting the standard
basis. Let $\rho :(\mathbb{Z}/2)^{n}\rtimes S_{n}\rightarrow \{+1,-1\}$ be
the group homomorphism mapping each $\sigma _{i}$ and each permutation $(ij)$
to $1$. Denote by $A_{n}^{+}=\ker \rho $ and $G^{+}=\mathbb{Z}^{n}\rtimes
A_{n}^{+}.$

\begin{proposition}
\label{prop} Let $n\geq 2.$ Any isometric action of $G$ on a complete $%
\mathrm{CAT}(0)$ space $X^{d}$ $(d<n)$ has a global fixed point. In other
words, the group $G$ has the property $\mathcal{FA}_{n-1}.$
\end{proposition}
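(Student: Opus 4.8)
The plan is to recognize the group $G=\mathbb{Z}^{n}\rtimes((\mathbb{Z}/2)^{n}\rtimes S_{n})$ as an affine Coxeter group and then invoke the fixed-point criterion for spherical Coxeter groups (Lemma \ref{coxe}). The finite factor $H=(\mathbb{Z}/2)^{n}\rtimes S_{n}$ is the hyperoctahedral group, i.e.\ the group of signed permutation matrices, which is exactly the finite Coxeter group of type $B_{n}$ (rank $n$); its standard Coxeter generators are the $n-1$ adjacent transpositions together with one sign change, and its diagram is a path whose last edge carries the label $4$. The normal subgroup $\mathbb{Z}^{n}$ on which $H$ acts by permutations and sign changes is precisely the coroot lattice of the root system $C_{n}$, and $H$ is its Weyl group, so $G$ is the affine Weyl group $\widetilde{C}_{n}$. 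It is important here that for type $C_{n}$ the coroot lattice is the full lattice $\mathbb{Z}^{n}$, which is why we obtain exactly $\mathbb{Z}^{n}\rtimes B_{n}$ and not a proper sublattice.

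First I would make this identification explicit by exhibiting the $n+1$ Coxeter generators $s_{0},s_{1},\dots,s_{n}$ of $G$: take $s_{i}$ to be the transposition $(i,i+1)\in S_{n}$ for $1\le i\le n-1$, take $s_{n}=\sigma_{n}$, and take $s_{0}=a_{1}\sigma_{1}$ (one checks $s_{0}$ is an involution reflecting in the affine wall $\langle x,2e_{1}\rangle=1$). One verifies that $\langle s_{0},\dots,s_{n}\rangle=G$ and that $(G,\{s_{0},\dots,s_{n}\})$ is the Coxeter system whose diagram is the path
\[
s_{0}\overset{4}{-}s_{1}-s_{2}-\cdots-s_{n-1}\overset{4}{-}s_{n},
\]
the extended (affine) Dynkin diagram of type $\widetilde{C}_{n}$, with both end edges labelled $4$ and all interior edges labelled $3$.

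The core step is to verify that $\widetilde{C}_{n}$ is $(n-1)$-spherical, i.e.\ that every $n$-element subset of $\{s_{0},\dots,s_{n}\}$ generates a finite group. Since the subsets of size $n$ are exactly the sets $S\setminus\{s_{k}\}$, this amounts to checking that deleting any single node from the affine diagram above leaves a disjoint union of finite-type Coxeter diagrams. Deleting $s_{0}$ or $s_{n}$ leaves a path with a single end-label $4$, which is of type $B_{n}$ (finite); deleting an interior node $s_{k}$ splits the path into two shorter paths, each carrying exactly one end-label $4$, of types $B_{k}$ and $B_{n-k}$ (with $B_{1}=A_{1}$), both finite. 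I would confirm finiteness either by citing the classification of Lemma \ref{finite} (equivalently, positivity of the associated Schl\"{a}fli matrix) or by the standard extended-Dynkin-diagram fact. Because any parabolic subgroup of a finite Coxeter group is again finite, it follows that every subset of $S$ of size at most $n$ generates a finite group, so $G$ is $d$-spherical for every $d\le n-1$.

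Finally I would conclude as follows: given an isometric action of $G$ on a complete $\mathrm{CAT}(0)$ space $X^{d}$ with $d<n$, either $d=0$ (where a fixed point is trivial) or $1\le d\le n-1$, in which case $G$ is $d$-spherical and Lemma \ref{coxe} yields a global fixed point; hence $G$ has property $\mathcal{FA}_{n-1}$. The main obstacle is the bookkeeping in the identification $G\cong\widetilde{C}_{n}$---in particular getting the lattice right ($C_{n}$ versus $B_{n}$, so that the translation part is all of $\mathbb{Z}^{n}$) and pinning down the affine generator $s_{0}$ so that the resulting diagram really is $\widetilde{C}_{n}$. Once the diagram is correct, the sphericity check is routine and the fixed-point conclusion via Lemma \ref{coxe} is immediate.
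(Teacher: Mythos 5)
Your proposal is correct and takes essentially the same route as the paper: the identical generating set $\{a_{1}\sigma_{1},(12),\dots,(n-1,n),\sigma_{n}\}$, the same path-shaped Coxeter diagram with both end edges labelled $4$ (i.e.\ $\widetilde{C}_{n}$), the same observation that deleting any node leaves finite type $B_{k}\times B_{n-k}$, and the same conclusion via the Helly argument of Lemma \ref{coxe}. The only cosmetic difference is that you identify $G$ exactly with the affine Weyl group $\widetilde{C}_{n}$, whereas the paper only uses a surjection from that Coxeter group onto $\langle S\rangle=G$, which already suffices since quotients of finite groups are finite.
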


\begin{proof}
Choose 
\begin{equation*}
S=\{a_{1}\sigma _{1},(12),(23),...,(n-1,n),\sigma _{n}\}.
\end{equation*}%
Note that each element of $S$ is of order two. Let $H=(W,S)$ be the coxeter
group assigned to the Coxeter matrix $(m_{ij})$ defined by $%
m_{ii}=2,m_{12}=4,m_{i,i+1}=3$ for $2\leq i\leq n$ and $m_{n,n+1}=4$ and all
other entries $m_{ij}=\infty .$ Recall that 
\begin{equation*}
H=\langle s_{i}\in S\mid (s_{i}s_{j})^{m_{ij}}=1,1\leq i,j\leq n\rangle .
\end{equation*}%
Let $f:H\rightarrow \langle S\rangle $ be defined as%
\begin{eqnarray*}
s_{1} &\rightarrow &a_{1}\sigma _{1}, \\
s_{i+1} &\rightarrow &(i,i+1),1\leq i\leq n-1, \\
s_{n+1} &\rightarrow &\sigma _{n}.
\end{eqnarray*}%
It could be directly checked that $f$ is a group homomorphism. The Coxeter
graph of $(W,S)$ is a path consisting of $n+1$ vertices with the edges $%
(s_{1},s_{2}),(s_{n},s_{n+1})$ labled by $4.$ The subgraph spanned by any $n$
vertices is either a path of type $B_{n}$ or disjoint union of two paths of
type $B_{k}$. By the classification of finite Coxeter groups (cf. Lemma \ref%
{finite}), any spherical subgroup of rank $n$ is finite in $H$. This proves
that the subgroup generated by any $n$ elements is finite. The Helly theorem
(cf. Lemma \ref{helly}) implies that $S$ has a common fixed point $x$. Note
that $a_{1}\in \langle S\rangle $ and thus $\mathbb{Z}^{n}<\langle S\rangle
. $ Therefore, the action of $G$ on $X$ has a bounded orbit $Gx$ and thus
has a global fixed point by Lemma \ref{basic} (3).
\end{proof}

\bigskip

Note that $\mathbb{Z}^{n}\rtimes ((\mathbb{Z}/2)^{n}\rtimes S_{n})$ could
act on the Euclidean space $\mathbb{R}^{n}$ in the standard way. This shows
that the bound in the previous proposition is sharp.

\bigskip

\begin{lemma}
\label{lem1.1}Let $E_{n}(R)$ act isometrically on a complete $\mathrm{CAT}%
(0) $ space $X^{d}$ $(d<n-1).$ Assume $n$ is odd. For any element $x\in R,$
the subgroup $\langle e_{1i}(x)\mid i=2,3,\cdots ,n\rangle $ has a fixed
point.
\end{lemma}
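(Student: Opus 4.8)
The plan is to realize the subgroup $U=\langle e_{1i}(x)\mid i=2,\dots,n\rangle$ as the translation part of a homomorphic image of the group $G_{n-1}=\mathbb{Z}^{n-1}\rtimes((\mathbb{Z}/2)^{n-1}\rtimes S_{n-1})$ appearing in Proposition \ref{prop}, which has property $\mathcal{FA}_{n-2}$. A direct computation with $e_{1i}(x)=I_n+xE_{1i}$ (where $E_{1i}E_{1j}=0$ for $i,j\geq 2$) shows that the $e_{1i}(x)$ pairwise commute, so $U$ is abelian and is a quotient of the free abelian group $\mathbb{Z}^{n-1}$ on symbols $a_i\mapsto e_{1i}(x)$. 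Since $d<n-1$, once I exhibit a homomorphism $\phi\colon G_{n-1}\to E_n(R)$ with each $e_{1i}(x)$ in its image, I can pull the given $E_n(R)$-action on $X$ back along $\phi$ to an isometric action of $G_{n-1}$ on $X^d$; Proposition \ref{prop}, applied with $n-1$ in place of $n$, then produces a point fixed by all of $\phi(G_{n-1})$, in particular by $U$.

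The real content is to build $\phi$, that is, to realize the finite hyperoctahedral factor as symmetries of $U$ inside $E_n(R)$. Let $B=(\mathbb{Z}/2)^{n-1}\rtimes S_{n-1}$, viewed as the group of $(n-1)\times(n-1)$ signed permutation matrices acting on the coordinates indexed by $\{2,\dots,n\}$. I would embed $B$ into $E_n(\mathbb{Z})=\mathrm{SL}_n(\mathbb{Z})\subseteq E_n(R)$ by the determinant-corrected map $w\mapsto\widehat{w}:=\mathrm{diag}(\det w)\oplus w$, which places $\det w=\pm1$ in the $(1,1)$-entry so that $\det\widehat{w}=(\det w)^2=1$; this $\widehat{\,\cdot\,}$ is a homomorphism because the scalar $\det w$ is multiplicative and central. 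Computing the conjugation action shows $\widehat{w}\,e_{1i}(x)\,\widehat{w}^{-1}=e_{1,\sigma(i)}(\epsilon_i\det(w)\,x)$ when $w$ sends $e_i\mapsto\epsilon_i e_{\sigma(i)}$; that is, $\widehat{B}$ normalizes $U$ and acts on it through the twisted representation $\psi(w)=(\det w)\,w$.

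The crux, and the place where the hypothesis that $n$ is odd enters, is that $\psi$ is an automorphism of $B$ precisely when $n$ is odd. Indeed $\psi(w)=I$ forces $w=(\det w)I\in\{\pm I\}$, and $\psi(-I)=\det(-I)\cdot(-I)=(-1)^{n-1}(-I)=(-1)^nI$, which equals $I$ exactly when $n$ is even. Hence for odd $n$ the map $\psi$ is injective, so, being an endomorphism of a finite group, an automorphism, and $\widehat{B}$ realizes the \emph{full} signed-permutation action on the $n-1$ coordinates of $U$. I would then define $\phi$ on $G_{n-1}=\mathbb{Z}^{n-1}\rtimes B$ by $a_i\mapsto e_{1i}(x)$ on the lattice and $w\mapsto\widehat{\psi^{-1}(w)}$ on $B$; the automorphism property of $\psi$ makes these choices compatible with the two semidirect-product structures, so $\phi$ is a well-defined homomorphism onto $\Gamma=U\rtimes\widehat{B}$ with $e_{1i}(x)=\phi(a_i)$ in its image.

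With $\phi$ in hand the argument closes as indicated: pulling the action of $E_n(R)$ on $X^d$ back along $\phi$ gives an isometric action of $G_{n-1}$ on a complete $\mathrm{CAT}(0)$ space of dimension $d<n-1$, and Proposition \ref{prop} (parameter $n-1$) yields a global fixed point $p$ for $G_{n-1}$; then $\phi(G_{n-1})=\Gamma$ fixes $p$, so in particular $U$ fixes $p$. I expect the only genuinely delicate step to be the determinant-parity computation isolating odd $n$: for even $n$ the same construction realizes only an index-two (type $D_{n-1}$) subgroup of the symmetries, and the full hyperoctahedral action needed to invoke Proposition \ref{prop} is unavailable, which is exactly why the lemma is stated for $n$ odd.
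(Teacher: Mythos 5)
Your proof is correct and takes essentially the same route as the paper: both reduce to Proposition \ref{prop} by realizing the hyperoctahedral group inside $E_{n}(R)$ as integer matrices of determinant one normalizing $\langle e_{1i}(x)\mid i=2,\dots ,n\rangle$, with a determinant correction that is available precisely because $n$ is odd. The only cosmetic difference is that the paper multiplies the generators by the central element $-I_{n}$ (so no twist $\psi$ needs to be undone), whereas you place $\det w$ in the $(1,1)$ entry and compensate with $\psi^{-1}$.
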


\begin{proof}
When $n$ is odd, let $H$ be the subgroup generated by 
\begin{equation*}
\{-I_{n}(12),-I_{n}(23),...,-I_{n}(n-1,n),-I_{n}\sigma _{n}\}
\end{equation*}%
and $\langle e_{1i}(x)\mid i=2,3,\cdots ,n\rangle .$ Define a group
homomorphism%
\begin{equation*}
\alpha :\mathbb{Z}^{n}\rtimes ((\mathbb{Z}/2)^{n}\rtimes S_{n})\rightarrow H
\end{equation*}%
by 
\begin{equation*}
a_{i}\mapsto e_{1,i+1}(x),\sigma _{i}\mapsto -I_{n}\varepsilon _{i}
\end{equation*}%
and 
\begin{equation*}
(i,i+1)\mapsto -I_{n}(i,i+1).
\end{equation*}%
Proposition \ref{prop} implies that $H$ has a fixed point.
\end{proof}

\bigskip

\bigskip

\begin{proof}[Proof of Theorem \protect\ref{1.2}]
Suppose that $R^{n}\rtimes E_{n}^{\prime }(R)$ acts on a complete $n-1$
dimensional $\mathrm{CAT}(0)$ space $X.$ When $n$ is odd, we view $(\mathbb{Z%
}/2)^{n}\rtimes S_{n}$ as a subgroup of $\mathrm{SL}_{n}(\mathbb{Z})=E_{n}(%
\mathbb{Z})$ through the group homomorphism $\alpha $ in the proof of Lemma %
\ref{lem1.1}. The ring homomorphism $\mathbb{Z}\rightarrow R$ preserving the
identity induces a group homomorphism $\mathrm{GL}_{n}(\mathbb{Z}%
)\rightarrow E_{n}^{\prime }(R).$ Without confusions, we still denote the
image by $(\mathbb{Z}/2)^{n}\rtimes S_{n}.$ For any $x\in R,$ let 
\begin{eqnarray*}
S_{x} &=&\{(x,\sigma _{1}),(12),(23),...,(n-1,n),\sigma _{n}\} \\
&\subseteq &R^{n}\rtimes E_{n}^{\prime }(R).
\end{eqnarray*}%
The obvious map $S\rightarrow S_{x}$ by $\sigma _{1}a_{1}\rightarrow
((x,0,\cdots ,0),\sigma _{1})$ induces a group homorphism%
\begin{equation*}
\mathbb{Z}^{n}\rtimes ((\mathbb{Z}/2)^{n}\rtimes S_{n})\rightarrow
R^{n}\rtimes ((\mathbb{Z}/2)^{n}\rtimes S_{n}).
\end{equation*}%
Proposition \ref{prop} implies that the subgroup $xR^{n}$ (for any $x$) and
thus $R^{n}$ has a fixed point.
\end{proof}

\bigskip

\begin{proof}[Proof of Theorem \protect\ref{1.1}]
When $n$ is odd, we have $E_{n}^{\prime }(R)=E_{n}(R).$ Theorem \ref{1.1}
follows Lemma \ref{lem1.1} and Lemma \ref{lem1.2}. When $n$ is even, the
same group homomoprhism $\alpha $ as in Lemma \ref{lem1.1} and Proposition %
\ref{prop} imply that the subgroup $\langle e_{1i}(x)\mid i=2,3,\cdots
,n\rangle $ has a fixed point. Theorem \ref{1.1} is proved by Lemma \ref{1.2}%
.
\end{proof}

\bigskip

\section{Actions of $\mathrm{Aut}(F_{n})$ on $\mathrm{CAT}(0)$ spaces}

Suppose that the free $F_{n}=\langle a_{1},...,a_{n}\rangle $ and $\mathrm{%
Aut}(F_{n})$ the group of automorphisms. For any $1\leq i\leq n,$ let $%
\varepsilon _{i}:F_{n}\rightarrow F_{n}$ be the involution defined by $%
a_{i}\rightarrow a_{i}^{-1}$ and $a_{j}\rightarrow a_{j}$ for any $j\neq i.$
The Nielsen transformations are defined by 
\begin{equation*}
\rho _{ij}:a_{i}\rightarrow a_{i}a_{j},a_{k}\rightarrow a_{k},k\neq i
\end{equation*}%
and 
\begin{equation*}
\lambda _{ij}:a_{i}\rightarrow a_{j}a_{i},a_{k}\rightarrow a_{k},k\neq i.
\end{equation*}%
For $1\leq i\neq j\leq n,$ denote by $(ij)$ the permutation of basis. It is
well-known that $\mathrm{Aut}(F_{n})$ is generated by the elements $\rho
_{ij},\lambda _{ij}$ and $\varepsilon _{i}$ (see Gersten \cite{ger}).

The following lemma is a variation on the argument that Gersten \cite{ger12}
used to show that $\mathrm{Aut}(F_{n})$ cannot act properly and cocompactly
on a $\mathrm{CAT}(0)$ space if $n\geq 3.$ (cf. \cite{bh} p. 253).

\begin{lemma}
\label{gers}Let $G=\langle \alpha ,\beta ,t\mid t^{p}\beta t^{-p}=\beta
\alpha ^{p}$ for any integer $p\rangle .$ If $(G,\langle t\rangle )$ has the
relative property $\mathcal{FA}_{n},$ then $(G,\langle \alpha \rangle )$ has 
$\mathcal{FA}_{n}$ for any $n.$
\end{lemma}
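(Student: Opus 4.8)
The plan is to show that whenever $G$ acts isometrically on an $n$-dimensional complete $\mathrm{CAT}(0)$ space $X$, the hypothesis forces the cyclic subgroup $\langle\alpha\rangle$ to have a \emph{bounded orbit}, and then to invoke the Bruhat--Tits center-of-a-bounded-set argument to produce a fixed point. The starting move is purely algebraic: rewrite the defining relation $t^{p}\beta t^{-p}=\beta\alpha^{p}$ as $\alpha^{p}=\beta^{-1}t^{p}\beta t^{-p}$, so that every power of $\alpha$ is expressed through a power of $t$ conjugated by $\beta$.

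Now fix such an action on $X$. Because $(G,\langle t\rangle)$ has $\mathcal{FA}_{n}$, the subgroup $\langle t\rangle$ fixes some point $x_{0}\in X$, so $t^{\pm p}x_{0}=x_{0}$ for every integer $p$. Evaluating the rewritten relation at $x_{0}$ and using $t^{-p}x_{0}=x_{0}$ gives $\alpha^{p}x_{0}=\beta^{-1}t^{p}\beta x_{0}$, and applying the isometry $\beta$ yields $d(x_{0},\alpha^{p}x_{0})=d(\beta x_{0},\,t^{p}(\beta x_{0}))$. Writing $w=\beta x_{0}$, the key observation is that $t^{p}$ still fixes $x_{0}$, so the triangle inequality together with $d(x_{0},t^{p}w)=d(t^{-p}x_{0},w)=d(x_{0},w)$ gives $d(w,t^{p}w)\le 2\,d(w,x_{0})$. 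Hence $d(x_{0},\alpha^{p}x_{0})\le 2\,d(x_{0},\beta x_{0})$, a bound \emph{independent of} $p$.

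It follows that the orbit $\langle\alpha\rangle x_{0}$ is bounded. Since $X$ is a complete $\mathrm{CAT}(0)$ space, a bounded subset has a unique circumcenter fixed by every isometry preserving it; as $\langle\alpha\rangle$ preserves its own orbit, it fixes that circumcenter. Thus $\langle\alpha\rangle$ has a fixed point for this arbitrary action, i.e. $(G,\langle\alpha\rangle)$ has $\mathcal{FA}_{n}$. The dimension $n$ never enters the estimate, so the implication holds for every $n$.

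I expect the only real point to watch is the uniformity of the displacement bound: the entire argument hinges on the ellipticity of $t$ making $d(w,t^{p}w)$ bounded by $2\,d(w,x_{0})$ regardless of $p$, so that the linear growth one would naively fear from $t^{p}$ simply does not occur. This is exactly the Gersten-type distortion phenomenon --- $\alpha^{p}$ has displacement uniformly bounded in $p$ --- recast through the common fixed point of $t$ rather than through word length. A notable feature is that no semisimplicity of $\alpha$ is assumed: the bounded-orbit route handles the hyperbolic and parabolic cases simultaneously and directly gives that $\alpha$ is elliptic, which is essential since we are working with $\mathcal{FA}_{n}$ rather than $s\mathcal{FA}_{n}$.
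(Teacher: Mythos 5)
Your proof is correct and follows essentially the same route as the paper: use the fixed point $x_{0}$ of $t$ to rewrite $\alpha^{p}x_{0}=\beta^{-1}t^{p}\beta x_{0}$, bound the displacement by $2\,d(x_{0},\beta x_{0})$ independently of $p$ via the triangle inequality (the paper's bound $d(\beta x,x)+d(\beta^{-1}x,x)$ is the same quantity), and conclude by the Bruhat--Tits circumcenter argument. No gaps.
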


\begin{proof}
Suppose that $G$ acts on an $n$-dimensional complete $\mathrm{CAT}(0)$ space 
$X$ and $t$ has a fixed point $x\in X$. For any $p$ we have%
\begin{eqnarray*}
d(\alpha ^{p}x,x) &=&d(\beta ^{-1}t^{p}\beta t^{-p}x,x)\leq d(\beta
^{-1}t^{p}\beta x,\beta ^{-1}t^{p}x)+d(\beta ^{-1}t^{p}x,x) \\
&\leq &d(\beta x,x)+d(\beta ^{-1}x,x).
\end{eqnarray*}%
Therefore, the orbit $\{\alpha ^{p}x:p\in \mathbb{Z}\}$ is bounded and thus $%
\alpha $ has a fixed point.
\end{proof}

\begin{theorem}
\label{niel}Let $X^{d}$ be a complete $\mathrm{CAT}(0)$ space and $\mathrm{%
Aut}(F_{n})$ $(n\geq 4)$ act on $X$ by isometries. When $d<n-2,$ each
Nielsen transformation of $\mathrm{Aut}(F_{n})$ has a fixed point.
\end{theorem}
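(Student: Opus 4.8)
The plan is to imitate the proof of Theorem~\ref{1.1}, replacing elementary matrices by Nielsen transformations, but with two new ingredients forced by the non-commutativity of $F_{n}$: a symmetrization of the transvections so that a signed-permutation group still acts on them by the reflection representation, and Gersten's distortion relation (Lemma~\ref{gers}) to pass from the symmetrized elements back to honest Nielsen transformations. The first move is a symmetry reduction. Conjugation by the finite subgroup generated by the transpositions $(ij)$ and the inversions $\varepsilon _{i}$ carries $\rho _{ij}$ to every other $\rho _{kl}$, and since one checks $\varepsilon _{i}\rho _{ij}\varepsilon _{i}=\varepsilon _{j}\lambda _{ij}\varepsilon _{j}$, it also carries $\rho _{ij}$ to every $\lambda _{kl}$; thus all Nielsen transformations form a single conjugacy class. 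As conjugate isometries have fixed points simultaneously, it suffices to produce a fixed point for one Nielsen transformation.

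For the main construction I fix the last letter $a_{n}$ as a hub and set $T_{i}=\rho _{in}\lambda _{in}$ for $1\le i\le n-1$, so that $T_{i}\colon a_{i}\mapsto a_{n}a_{i}a_{n}$. These $T_{i}$ pairwise commute (they modify distinct letters and share only the unmodified hub $a_{n}$), the transpositions $(ij)$ with $i,j<n$ permute them, and the source inversions satisfy $\varepsilon _{i}T_{i}\varepsilon _{i}=T_{i}^{-1}$ while $\varepsilon _{j}$ fixes $T_{i}$ for $j\ne i$. Hence $a_{i}\mapsto T_{i}$, $\sigma _{i}\mapsto \varepsilon _{i}$, $(ij)\mapsto (ij)$ defines a homomorphism $\mathbb{Z}^{n-1}\rtimes \big((\mathbb{Z}/2)^{n-1}\rtimes S_{n-1}\big)\to \mathrm{Aut}(F_{n})$ realizing the full reflection representation on the translation lattice. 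By Proposition~\ref{prop} the domain has property $\mathcal{FA}_{n-2}$, so for $d<n-2$ the image --- and in particular each $T_{i}$ together with the signed permutations --- has a common fixed point. The single dimension lost compared with Theorem~\ref{1.1} is exactly the price of the symmetrization: in $F_{n}$ the transvections sharing a common source fail to commute, so one cannot mount the reflection representation on pure transvections, and the only commuting family admitting \emph{individual} sign-reflections is the symmetrized one $\{\rho _{in}\lambda _{in}\}$.

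It then remains to extract a genuine Nielsen transformation, and here I invoke Lemma~\ref{gers}. The Nielsen transformations satisfy Steinberg-type commutator identities; for instance with $(\alpha ,\beta ,t)=(\rho _{31},\rho _{32},\rho _{21})$ one verifies directly that $t^{p}\beta t^{-p}=\beta \alpha ^{p}$ for all $p$, which is exactly the relation of Lemma~\ref{gers}. The intended use is to take for $t$ an element that the construction of the previous paragraph already fixes and to tie it by such a relation to a pure transvection $\alpha $; Lemma~\ref{gers} then endows $\alpha $ with a fixed point, after which the symmetry of the first paragraph (aided by Lemma~\ref{le1} and the commuting structure of the $\rho _{in}$) spreads the conclusion to every Nielsen transformation, completing the proof for $d<n-2$.

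The hard part is precisely this last bridge. The elements whose fixed points the reflection-representation construction actually delivers are the \emph{two-sided} symmetrized transvections $T_{i}\colon a_{i}\mapsto a_{n}a_{i}a_{n}$ and their products, whereas a Nielsen transformation is one-sided and $T_{i}$ is not even conjugate to a pure transvection (its image in $\mathrm{GL}_{n}(\mathbb{Z})$ is $e_{1n}(2)$ rather than $e_{1n}(1)$). Consequently the element $t$ fed into Gersten's relation must be chosen so that it both lies in the subgroup already known to fix a point and satisfies a valid relation $t^{p}\beta t^{-p}=\beta \alpha ^{p}$ with $\alpha $ a pure transvection; reconciling the two-sided and one-sided behaviour --- most plausibly by an induction that restricts to the convex fixed set of the commuting family $\{T_{i}\}$, on which $\lambda _{in}=\rho _{in}^{-1}$ and $\varepsilon _{i}$ commutes with $\rho _{in}$, and repeatedly passes to fixed sets of the involutions --- is where the genuine work of the theorem lies.
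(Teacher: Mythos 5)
Your first stage is correct but lands on the wrong elements, and the bridge you yourself flag as ``the hard part'' is exactly what is missing; as written the argument does not close. The symmetrization $T_{i}=\rho_{in}\lambda_{in}$ does define a homomorphism from $\mathbb{Z}^{n-1}\rtimes\bigl((\mathbb{Z}/2)^{n-1}\rtimes S_{n-1}\bigr)$, and Proposition \ref{prop} then gives the $T_{i}$ a common fixed point when $d<n-2$. But no element of that subgroup satisfies a Gersten relation $t^{p}\beta t^{-p}=\beta\alpha^{p}$ with $\alpha$ a pure transvection: conjugating $\rho_{1i}$ by $T_{i}^{p}$ sends $a_{1}$ to $a_{1}a_{n}^{-p}a_{i}a_{n}^{-p}$, which is not of the form $\beta\alpha^{p}$ for any fixed $\alpha,\beta$, precisely because $T_{i}$ is two-sided. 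The triple $(\alpha,\beta,t)=(\rho_{31},\rho_{32},\rho_{21})$ you exhibit does satisfy the relation, but there $t=\rho_{21}$ is itself a pure Nielsen transformation, so feeding it into Lemma \ref{gers} is circular. Your fallback (restricting to $\mathrm{Fix}(T_{i})$, where $\lambda_{in}$ acts as $\rho_{in}^{-1}$) is not developed into an argument, and I do not see how it produces a fixed point for $\rho_{in}$ within the stated dimension bound.

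The paper avoids the obstruction by choosing the first-stage elements differently: it runs the Coxeter/Helly argument on $S=\{\rho_{21}^{-1}\rho_{31}(23),(34),\dots,(n-1,n),(2,n)\}$, whose Coxeter graph is a cycle of length $n-1$; every proper subdiagram is of finite type $A$, so any $n-2$ of these involutions generate a finite group and Helly applies for $d<n-2$. This yields a fixed point for $\rho_{21}^{-1}\rho_{31}$, hence by conjugation for every $\rho_{ij}^{-1}\rho_{i'j}$ --- a \emph{one-sided} difference of two transvections with a common multiplier, still killed in the abelianization but, unlike your $T_{i}$, compatible with Gersten's relation. Indeed $T=\rho_{32}^{-1}\rho_{42}$ satisfies $T^{-p}\rho_{13}T^{p}=\rho_{13}\rho_{12}^{p}$, so Lemma \ref{gers} (applied to $T^{-1}$, $\beta=\rho_{13}$, $\alpha=\rho_{12}$) gives $\rho_{12}$ a fixed point, and conjugacy of all Nielsen transformations finishes the proof. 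To salvage your approach you would need to replace the symmetrized $\rho_{in}\lambda_{in}$ by elements of the form $\rho_{ij}^{-1}\rho_{i'j}$ --- which still carry an involution/Coxeter structure, via the cycle above --- before invoking Gersten's lemma.
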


\begin{proof}
Let $H$ be the subgroup generated by 
\begin{equation*}
S=\{\rho _{21}^{-1}\rho _{31}(23),(34),...,(n-1,n),(2,n)\}.
\end{equation*}%
It can be directly checked that the product of any succesive two elements
(including that of the first element and the last one) is of order $3.$ Let $%
H$ be the coxeter group assigned to the Coxeter graph $K$ a loop of length $%
n-1.$ Explicitly, the group 
\begin{equation*}
H=\langle s_{1},\cdots ,s_{n-1}\mid s_{i}^{2}=(s_{i}s_{i+1})^{3}=1,1\leq
i\leq n-1\rangle ,
\end{equation*}%
where $s_{n}=s_{1}.$ Let $f:H\rightarrow \langle S\rangle $ be defined as 
\begin{eqnarray*}
s_{1} &\rightarrow &\rho _{21}^{-1}\rho _{31}(23), \\
s_{i} &\rightarrow &(i+1,i+2),2\leq i\leq n-2, \\
s_{n-1} &\rightarrow &(2,n).
\end{eqnarray*}

It can be directly checked that $f$ is a group homomorphism. Since the
subgraph spanned by $n-2$ vertices in the Coxter graph $K$ is of type $A,$
any $n-2$ elements in $S$ generate a finite group (cf. Lemma \ref{finite}).
By the Helley theorem, $S$ has a common fixed point. Therefore, the element $%
\rho _{21}^{-1}\rho _{31}\in \langle S\rangle $ has a fixed point. Thus $%
\rho _{ij}^{-1}\rho _{i^{\prime }j}$ has fixed point for any $i,i^{\prime
},j $ after conjugating by a permutation.

Let $T=\rho _{32}^{-1}\rho _{42}.$ It can be directly checked that $%
T^{-p}\rho _{13}T^{p}=\rho _{13}\rho _{12}^{p}$ for any integer $p.$ Let $%
G=\langle \alpha ,\beta ,t\mid t^{p}\beta t^{-p}=\beta \alpha ^{p}$ for any
integer $p\rangle .$ Define a group homomorphism%
\begin{equation*}
f:G\rightarrow \mathrm{Aut}(F_{n})
\end{equation*}%
by $f(t)=T,$ $f(\alpha )=\rho _{12},f(\beta )=\rho _{13}.$ Lemma \ref{gers}
implies that $\rho _{12}$ has a fixed point. Since any Nielsen
transformation is conjugate to $\rho _{12},$ the statement is proved.
\end{proof}

\bigskip

Theorem \ref{1.3} is the same as the following theorem.

\begin{theorem}
Let $X^{d}$ be a complete $\mathrm{CAT}(0)$ space and $\mathrm{Aut}(F_{n})$ $%
(n\geq 4)$ act on $X$ by isometries. When $d<n-2,$ the subgroup $\mathrm{Aut}%
(F_{2})$ has a fixed point.
\end{theorem}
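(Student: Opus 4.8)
The plan is to deduce this statement directly from Theorem~\ref{niel}, using the fact that $\mathrm{Aut}(F_2)$ is boundedly generated by conjugates of a single Nielsen transformation together with the machinery of Lemma~\ref{basic}(2) and Lemma~\ref{le1}. First I would recall that $\mathrm{Aut}(F_2)$ is generated by the Nielsen transformations $\rho_{12},\rho_{21},\lambda_{12},\lambda_{21}$ and the involutions $\varepsilon_1,\varepsilon_2$ (restricting the standard generating set of $\mathrm{Aut}(F_n)$ to the first two letters). By Theorem~\ref{niel}, since $d<n-2$ and $n\geq 4$, \emph{every} Nielsen transformation of $\mathrm{Aut}(F_n)$ has a fixed point; in particular each of $\rho_{12},\rho_{21},\lambda_{12},\lambda_{21}$ has a nonempty fixed-point set in $X$. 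So the content to be supplied is upgrading ``each generator fixes a point'' to ``the whole subgroup $\mathrm{Aut}(F_2)$ fixes a point.''

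The key step is to organise the generators of $\mathrm{Aut}(F_2)$ into commuting pairs or a short chain so that Lemma~\ref{le1} can be applied repeatedly, and to handle the involutions $\varepsilon_i$. The cleanest route is as follows. First I would observe that, just as in the proof of Lemma~\ref{lem1.2}, any two powers of the same Nielsen generator commute, so the cyclic subgroup $\langle\rho_{12}\rangle$ (and each of $\langle\rho_{21}\rangle,\langle\lambda_{12}\rangle,\langle\lambda_{21}\rangle$) has a fixed point. Then I would exhibit $\mathrm{Aut}(F_2)$ as built up by semidirect products of subgroups each of which is already known to fix a point, invoking Lemma~\ref{le1} at each stage: whenever $G=A\rtimes H$ with $X^A,X^H\neq\emptyset$, the projection onto the convex set $X^A$ carries a fixed point of $H$ to a fixed point of $G$. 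Concretely, $\langle\rho_{12},\rho_{21}\rangle$ contains the inner-automorphism and translation structure of $\mathrm{Aut}(F_2)$, and one checks that the subgroup generated by all four Nielsen transformations, being a normal subgroup of finite index in $\mathrm{Aut}(F_2)$ (the ``special'' or orientation-preserving part), already has a fixed point by iterating Lemma~\ref{le1}. Finally, the involutions $\varepsilon_1,\varepsilon_2$ generate a finite group, and the full $\mathrm{Aut}(F_2)$ contains this Nielsen subgroup as a finite-index normal subgroup, so Lemma~\ref{basic}(3) promotes the fixed point of the finite-index subgroup to a fixed point of all of $\mathrm{Aut}(F_2)$.

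Alternatively, and perhaps more robustly, I would use Lemma~\ref{basic}(2): it suffices to show $\mathrm{Aut}(F_2)$ is boundedly generated by finitely many conjugates of a single Nielsen transformation. Since $\mathrm{Aut}(F_2)$ surjects onto $\mathrm{GL}_2(\mathbb{Z})$ with finite kernel, and since every Nielsen transformation is conjugate in $\mathrm{Aut}(F_n)$ to $\rho_{12}$ (as established at the end of the proof of Theorem~\ref{niel}), I can realise all generators of a finite-index subgroup of $\mathrm{Aut}(F_2)$ as a bounded-length product of conjugates of $\rho_{12}$. Then $(\mathrm{Aut}(F_n),\langle\rho_{12}\rangle)$ having $\mathcal{FA}_{n-3}$, combined with bounded generation, yields $(\mathrm{Aut}(F_n),\mathrm{Aut}(F_2))$ having $\mathcal{FA}_{n-3}$, which is exactly the claim after re-indexing $d<n-2$ to the property $\mathcal{FA}_{n-3}$.

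The main obstacle I anticipate is the bounded-generation (or the iterated semidirect-product) bookkeeping for $\mathrm{Aut}(F_2)$: one must verify that the Nielsen transformations $\rho_{12},\rho_{21},\lambda_{12},\lambda_{21},\varepsilon_1,\varepsilon_2$ really do generate $\mathrm{Aut}(F_2)$ and arrange them into a sequence of subgroups to which Lemma~\ref{le1} applies, paying careful attention to the non-abelian relations (the mapping class group $\mathrm{Aut}(F_2)\cong$ an extension of $\mathrm{GL}_2(\mathbb{Z})$ is not itself nilpotent, so one cannot simply quote the nilpotent lemma). The commuting-pairs step is delicate because $\rho_{12}$ and $\rho_{21}$ do \emph{not} commute, so the reduction to Lemma~\ref{le1} must proceed through carefully chosen abelian or finite subgroups (e.g.\ $\langle\rho_{12}\rangle$ together with a commuting involution) rather than through the full set of Nielsen generators at once. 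Getting the correct explicit normal series, or equivalently a clean bounded-generation statement in terms of conjugates of $\rho_{12}$, is where the real work lies.
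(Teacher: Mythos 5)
Your proposal has a genuine gap, and it occurs at exactly the point you flag as ``where the real work lies'': neither of your two routes can upgrade ``each Nielsen transformation is elliptic'' to ``$\mathrm{Aut}(F_2)$ has a global fixed point'' by working \emph{inside} $\mathrm{Aut}(F_2)$. For the semidirect-product route, the subgroup $\langle\rho_{12},\rho_{21},\lambda_{12},\lambda_{21}\rangle=\mathrm{SAut}(F_2)$ surjects onto $\mathrm{SL}_2(\mathbb{Z})$ and contains nonabelian free subgroups; it admits no normal series, and no decomposition $A\rtimes H$ with both factors already known to fix points, to which Lemma \ref{le1} could be applied --- the only commuting pairs among the Nielsen generators are $\{\rho_{12},\lambda_{12}\}$ and $\{\rho_{21},\lambda_{21}\}$, and the $\mathbb{Z}^2$'s they generate do not assemble into such a tower. ``One checks that \dots\ by iterating Lemma \ref{le1}'' is precisely the assertion that cannot be checked. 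For the bounded-generation route, Lemma \ref{basic}(2) requires that \emph{every} element of $\mathrm{Aut}(F_2)$ be a product of at most $N$ elements drawn from finitely many conjugates of $\langle\rho_{12}\rangle$; writing the finitely many \emph{generators} as short products is not enough. This genuine bounded generation fails: $\mathrm{SL}_2(\mathbb{Z})$ is virtually free and is well known not to be boundedly generated by finitely many cyclic subgroups, and the same then holds for $\mathrm{Aut}(F_2)$.

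The idea your proposal is missing is that the paper does not argue inside $\mathrm{Aut}(F_2)$ at all at this stage; it exploits the ambient group. One chooses the four-element generating set $S=\{\rho_{12}\varepsilon_2,\ \varepsilon_1,\ \varepsilon_2,\ \varepsilon_1\varepsilon_2(12)\}$ of $\mathrm{Aut}(F_2)$ and checks that every \emph{pair} in $S$ has a common fixed point (via Theorem \ref{niel} together with Lemma \ref{le1} applied to $\langle\rho_{12},\lambda_{12}\rangle\rtimes\langle\varepsilon_1,\varepsilon_2\rangle\cong\mathbb{Z}^2\rtimes(\mathbb{Z}/2)^2$, an infinite dihedral subgroup, and a finite subgroup). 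Then, since $\mathrm{Aut}(F_n)$ contains $[\frac{n}{2}]$ pairwise commuting conjugate copies of $\mathrm{Aut}(F_2)$ (hence of $S$) along the diagonal, Bridson's Lemma \ref{3.4} with $k_1=\cdots=k_{[n/2]}=2$ upgrades ``every pair has a common fixed point'' to ``every finite subset of $S$ has a common fixed point'' whenever $d<2[\frac{n}{2}]$, which is implied by $d<n-2$. This pigeonhole argument across commuting copies in the larger group $\mathrm{Aut}(F_n)$ is the essential mechanism, and it cannot be replaced by the internal bookkeeping you propose.
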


\begin{proof}
It is already proved in Theorem \ref{niel} that each Nielsen transformation
has a fixed point. Choose 
\begin{equation*}
S=\{\rho _{12}\varepsilon _{2},\varepsilon _{1},\varepsilon _{2},\varepsilon
_{1}\varepsilon _{2}(12)\}.
\end{equation*}%
We check that every two elements on $S$ has a common fixed in the following:

The elements $\rho _{12}\varepsilon _{2}$ and $\varepsilon _{1}$ lie in $%
\langle \rho _{12},\lambda _{12}\rangle \rtimes \langle \varepsilon
_{2},\varepsilon _{1}\rangle \cong \mathbb{Z}^{2}\rtimes (\mathbb{Z}/2)^{2}.$
Since $\rho _{12}$ commutes with $\lambda _{12},$ they have a common fixed
point. Therefore, the elements $\rho _{12}\varepsilon _{2}$ and $\varepsilon
_{1}$ have a common fixed point. The elements $\rho _{12}\varepsilon _{2}$
and $\varepsilon _{2}$ generate an infinite Dihedral group and thus have a
common fixed point. The elements $\rho _{12}\varepsilon _{2}$ and $%
\varepsilon _{1}\varepsilon _{2}(12)$ generate a finite group and thus have
a common fixed point.

Note that $\mathrm{Aut}(F_{n})$ contains $2[\frac{n}{2}]$ copies of $\mathrm{%
Aut}(F_{2})$ along the diagonal. The Bridson's lemma \ref{3.4} implies that
when $d<2[\frac{n}{2}],$ the set $S$ has a common fixed point. Since $S$
generates $\mathrm{Aut}(F_{2}),$ the proof is finished.
\end{proof}

\begin{lemma}
Let $X^{d}$ be a complete $\mathrm{CAT}(0)$ space and $\mathrm{Aut}(F_{n})$ $%
(n\geq 5)$ act on $X$ by isometries. When $d<n-3,$ the subgroup $\mathrm{Aut}%
(F_{2})\ltimes F_{2}$ has a fixed point.
\end{lemma}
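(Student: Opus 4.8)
The plan is to realize $\mathrm{Aut}(F_2)\ltimes F_2$ as an explicit subgroup of $\mathrm{Aut}(F_n)$ and then handle its two factors separately, gluing them with Lemma \ref{le1}. Concretely, I take $\mathrm{Aut}(F_2)=\langle \rho_{12},\lambda_{12},\varepsilon_1,\varepsilon_2,(12)\rangle$ acting on the free factor $\langle a_1,a_2\rangle$ and fixing $a_3,\dots,a_n$, and I take the normal factor to be $F_2=\langle \lambda_{31},\lambda_{32}\rangle$, where $\lambda_{3i}\colon a_3\mapsto a_i a_3$. A direct check shows $\mathrm{Aut}(F_2)$ normalizes this free group: each generator of $\mathrm{Aut}(F_2)$ sends $a_i$ to a word in $a_1,a_2$, so conjugating $\lambda_{3i}$ by it produces the map $a_3\mapsto(\text{word in }a_1,a_2)\,a_3$, which is a product of $\lambda_{31},\lambda_{32}$. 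Since $\lambda_{31},\lambda_{32}$ generate a rank-$2$ free group meeting $\mathrm{Aut}(F_2)$ trivially, the subgroup really is the semidirect product $F_2\rtimes \mathrm{Aut}(F_2)$.

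Because $d<n-3$, the $\mathrm{Aut}(F_2)$ factor already has a non-empty fixed-point set by Theorem \ref{1.3} (which asserts $\mathcal{FA}_{n-3}$ for $\mathrm{Aut}(F_2)$). By Lemma \ref{le1} it therefore suffices to produce a fixed point for the normal factor $F_2=\langle \lambda_{31},\lambda_{32}\rangle$: once $X^{F_2}\neq\emptyset$, the fact that $\mathrm{Aut}(F_2)$ normalizes $F_2$ means it preserves the convex set $X^{F_2}$, and projecting $X^{\mathrm{Aut}(F_2)}$ onto $X^{F_2}$ (as in the proof of Lemma \ref{le1}) yields a point fixed by the whole group.

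The heart of the matter is thus the fixed point for $F_2$, which is not immediate because $\lambda_{31}$ and $\lambda_{32}$ do not commute, so the commuting-pair argument underlying Lemma \ref{le1} cannot be applied to them directly. Instead I would exploit the commuting conjugate copies $S_j:=\{\lambda_{j1},\lambda_{j2}\}$, with $\lambda_{ji}\colon a_j\mapsto a_i a_j$, for $j=3,4,\dots,n$. Whenever $j\neq j'$ the elements of $S_j$ and $S_{j'}$ have disjoint supports, so $[\lambda_{ji},\lambda_{j'i'}]=1$; hence the families $S_3,\dots,S_n$ pairwise commute, while the permutation automorphism $(3\,j)$ conjugates $S_3$ to $S_j$, so they are all conjugate. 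Each $\lambda_{ji}$ is a Nielsen transformation, hence elliptic by Theorem \ref{niel} (valid in our range $d<n-2$), so every one-element subset of each $S_j$ has a common fixed point. Applying Bridson's Lemma \ref{3.4} with $k_3=\cdots=k_n=1$ and the $m=n-2$ families $S_3,\dots,S_n$, the dimension condition $d<k_3+\cdots+k_n=n-2$ holds (it is implied by $d<n-3$), so some $S_j$ has the property that every finite subset has a common fixed point; in particular $\langle\lambda_{j1},\lambda_{j2}\rangle$ fixes a point, and conjugating by $(3\,j)$ transports this to a fixed point of $F_2=\langle\lambda_{31},\lambda_{32}\rangle$.

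The main obstacle is exactly this last step, namely obtaining a global fixed point for the non-abelian free group $F_2$ from the individual ellipticity of its Nielsen generators; the resolution is that Lemma \ref{3.4} converts ``each generator is elliptic, with enough pairwise-commuting conjugate companions'' into ``some conjugate copy is globally elliptic,'' thereby bypassing the need for $\lambda_{31}$ and $\lambda_{32}$ to commute. (A more hands-on alternative would be to transfer ellipticity between generators using Gersten's Lemma \ref{gers}, but the commuting-copies route is cleaner.) I would then double-check two routine points: that $S_3,\dots,S_n$ genuinely commute pairwise (disjointness of supports) and that $(3\,j)S_3(3\,j)=S_j$ on the nose, so that the unspecified index produced by Lemma \ref{3.4} can be moved back to $j=3$. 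Note that the argument as organized above is actually valid for all $d<n-2$, so the stated range $d<n-3$ holds with room to spare; the hypothesis $n\geq 5$ only serves to guarantee that enough commuting copies $S_j$ are available for the application of Lemma \ref{3.4}.
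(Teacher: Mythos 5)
Your argument is correct, and for the crucial step --- producing a fixed point for the normal free factor --- it takes a genuinely different route from the paper's. Both proofs share the same skeleton: realize the subgroup as $F_{2}\rtimes \mathrm{Aut}(F_{2})$, fix the $\mathrm{Aut}(F_{2})$ factor via Theorem \ref{1.3}, fix the normal $F_{2}$, and glue the two fixed-point sets with Lemma \ref{le1}. For the normal $F_{2}$, the paper introduces a twisting automorphism $T\colon x_{4}\mapsto x_{4}w^{-1},\ x_{5}\mapsto x_{5}w$, proves $T$ is elliptic by a Coxeter-group/Helly argument applied to the set $\{T(45),(56),\dots ,(2,n),(4,n)\}$ (this is exactly where $d<n-3$ and $n\geq 5$ enter), and then feeds the relation $T^{-p}\rho _{24}T^{p}=\rho _{24}\rho _{2,w}^{p}$ into Gersten's distortion lemma (Lemma \ref{gers}) to make the elements $\rho _{2,w}$, and hence each $\rho _{1,v}$, elliptic. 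You instead take the $n-2$ pairwise-commuting, mutually conjugate two-element families $S_{j}=\{\lambda _{j1},\lambda _{j2}\}$, note that each $\lambda _{ji}$ is elliptic by Theorem \ref{niel}, and apply Bridson's Lemma \ref{3.4} with all $k_{j}=1$; after conjugating by $(3\,j)$ this gives a \emph{common} fixed point for the generating pair of your $F_{2}$. Your route is shorter, works already for $d<n-2$ rather than $d<n-3$, and has a real advantage at the final step: Lemma \ref{3.4} hands you a common fixed point of a generating set of the non-abelian free group all at once, whereas the paper's route directly establishes only that each individual element $\rho _{1,v}$ is elliptic and leaves the passage from element-wise ellipticity to a global fixed point of $F_{2}$ implicit. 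What the paper's mechanism buys in exchange is ellipticity of the transvections $\rho _{2,w}$ for an \emph{arbitrary} word $w$, a tool it reuses in this style of argument, while your approach is confined to configurations where enough commuting conjugate copies of the generating pair are available.
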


\begin{proof}
Let $T:F_{n}\rightarrow F_{n}$ be the transformation given by 
\begin{equation*}
x_{4}\rightarrow x_{4}w^{-1},x_{5}\rightarrow x_{5}w,
\end{equation*}%
for a word $w\in \langle x_{1},x_{3}\rangle <F_{n}.$ Then $T^{-1}\rho
_{24}T=\rho _{24}\rho _{2,w},$ where $\rho _{2,w}(x_{2})=x_{2}w$ and $\rho
_{2,w}(x_{i})=x_{i}$ for any $i\neq 2.$ Consider the set 
\begin{equation*}
S=\{T(45),(56),...,(n-1,n),(2,n),(4,n)\}.
\end{equation*}%
A similar argument using Coxeter groups as in the proof of Theorem \ref{niel}
shows that any $n-4$ elements of $S$ generate a finite group. This implies
that when $d<n-3,$ the whole group has a fixed point by the Helly's theorem.
Therefore, the element $T$ has a fixed point. Note that for any integer $p,$
we have $T^{-p}\rho _{24}T^{p}=\rho _{24}\rho _{2,w}^{p}.$ Let $G=\langle
\alpha ,\beta ,t\mid t^{p}\beta t^{-p}=\beta \alpha ^{p}$ for any integer $%
p\rangle .$ Define a group homomorphism%
\begin{equation*}
f:G\rightarrow \mathrm{Aut}(F_{n})
\end{equation*}%
by $f(t)=T,$ $f(\alpha )=\rho _{2,w},f(\beta )=\rho _{24}.$ Lemma \ref{gers}
implies that $\rho _{2,w}$ has a fixed point. Since $(12)\rho
_{2,w}(12)=\rho _{1,v},v\in \langle x_{2},x_{3}\rangle ,$ we see that $%
F_{2}=\langle \rho _{1v}:v\in \langle x_{2},x_{3}\rangle \rangle $ has a
fixed point. Note that $\mathrm{Aut}(F_{2})$ normalizes $F_{2}.$ Theorem \ref%
{1.3} shows that $\mathrm{Aut}(F_{2})$ has a fixed. The semi-direct product $%
\mathrm{Aut}(F_{2})\ltimes F_{2}$ has a fixed point by Lemma \ref{le1}.
\end{proof}

\bigskip

\begin{proof}[Proof of Corollary \protect\ref{1.4}]
Let 
\begin{equation*}
S=\{\varepsilon _{2}\rho _{12},\varepsilon _{1}(23),\varepsilon
_{1}\varepsilon _{2}(12),(i,i+1),i=3,...,n-1,\varepsilon _{n}\},
\end{equation*}%
\begin{equation*}
S_{1}=\{\varepsilon _{2}\rho _{12},\varepsilon _{3},\varepsilon
_{1}(23),\varepsilon _{1}\varepsilon _{2}(12)\}.
\end{equation*}%
It is direct (see \cite{var}) to check that any $2$ elements in $S$ or $%
S_{1} $ generate a finite group and thus have a common fixed point. Note
that $\mathrm{Aut}(F_{n})$ contains $[\frac{n}{3}]$ copies of $\mathrm{Aut}%
(F_{3})$ along the diagonal. Since $S_{1}$ lies in $\mathrm{Aut}(F_{3})$, we
know that $\mathrm{Aut}(F_{n})$ contains $[\frac{n}{3}]$ copies of $S_{1},$
any two of which commute. Bridson's result (cf. Lemma \ref{3.4}) implies
that when $d<2[\frac{n}{3}],$ the set $S_{1}$ has a global fixed point.

We will prove that any $(d+1)$-element subset of $S$ has a common fixed
point. Inductively, assume that it is already proved for any $(k-1)$-element
subset $(k\geq 4)$. Let 
\begin{equation*}
S_{k}=\{\varepsilon _{2}\rho _{12},\varepsilon _{3},\varepsilon
_{1}(23),\varepsilon _{1}\varepsilon _{2}(12),(i,i+1),i=3,...,k-1\}.
\end{equation*}%
Note that any $k$-element subset of $S$ generates a finite group, except
possibly $S_{k}.$ Note also that $S_{k}$ is subset of $\mathrm{Aut}(F_{k})$
and $\mathrm{Aut}(F_{n})$ contains $[\frac{n}{k}]$ copies of $\mathrm{Aut}%
(F_{k})$ and thus of $S_{k}.$ Lemma \ref{3.4} implies that when $d<(k-1)[%
\frac{n}{k}]$ any finitely many elements of $S_{k}$ has a fixed point. Note
that 
\begin{equation*}
d<2[\frac{n}{3}]\leq (k-1)[\frac{n}{k}]
\end{equation*}%
for any $k\geq 4.$ Therefore, any $k$-element subset $S_{k}$ has a fixed
point. The Helly theorem (cf. Lemma \ref{helly}) implies that $\langle
S\rangle =\mathrm{Aut}(F_{n})$ has a fixed point.
\end{proof}

\section{Generalizations}

In this section, we generalize the theorems proved in previous sections to a
more general setting. For this purpose, we give the following definition.

\begin{definition}
Let $X$ be a topological space and $G$ be a subgroup of its homeomorphism
group $\mathrm{Homeo}(X).$ For an integer $n>0,$ the pair (transformation
group) $(X,G)$ is $n$-Helly good if

\begin{enumerate}
\item[(i)] any finite subgroup $H<G$ has a non-empty fixed point set $%
\mathrm{Fix}(H);$ and

\item[(ii)] any collection of finitely many finite subgroups $%
\{H_{i}\}_{i\in I}$ $(H_{i}<G)$ has a non-empty intersection $\cap \mathrm{%
Fix}(H_{i}),$ whenever $n+1$ of them has a nonempty intersection $\cap
_{j=1}^{n+1}\mathrm{Fix}(H_{i_{j}}).$
\end{enumerate}
\end{definition}

In such a case, we call $X$ $n$-Helly good with respect to the group $G.$

\begin{theorem}
Let $(X,G)$ be an $m$-Helly good pair. Suppose that $n>m$ and $R$ is a ring.
When the semi-direct product $R^{n}\rtimes E_{n}^{\prime }(R)$ acts on $X$
by homeomorphisms in $G,$ the abelian subgroup $\{(x_{1},x_{2},\cdots
,x_{n})\mid x_{i}\in \mathbb{Z}x\subset R\}$ has a fixed point for any $x\in
R$. In particular, any elementary matrix $e_{ij}(x)$ has a fixed point when $%
E_{n+1}^{\prime }(R)$ acts on $X$ by homeomorphisms in $G,$ for any $x\in R$
and $1\leq i\neq j\leq n.$
\end{theorem}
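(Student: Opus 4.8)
The plan is to transcribe the proof of Theorem~\ref{1.2} into the topological language, using the two clauses of the $m$-Helly good hypothesis as the substitutes for the two CAT(0) inputs of that argument: clause (i) replaces the fact that a finite group of isometries of a complete CAT(0) space has a fixed point, and clause (ii) replaces the Helly theorem (Lemma~\ref{helly}) applied to closed convex fixed-point sets. The one feature of the CAT(0) proof that has no homeomorphism-level analogue is the metric step ``bounded orbit $\Rightarrow$ fixed point'' used at the end of Proposition~\ref{prop}; since the conclusion sought here is only a fixed point for the abelian subgroup $A_{x}=\{(x_{1},\dots,x_{n})\mid x_{i}\in\mathbb{Z}x\}$, and not a global fixed point for the whole group, I will not need any replacement for it --- avoiding that step is exactly why the statement must be restricted to $A_{x}$.

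First I would prove the topological analogue of Proposition~\ref{prop}: if $\tilde{G}=\mathbb{Z}^{n}\rtimes((\mathbb{Z}/2)^{n}\rtimes S_{n})$ acts on $X$ through a homomorphism $\phi:\tilde{G}\to G$, then $\phi(\tilde{G})$ has a fixed point. Take the generating set $S=\{a_{1}\sigma_{1},(12),\dots,(n-1,n),\sigma_{n}\}$ of involutions, for which $\langle S\rangle=\tilde{G}$, and for $s\in S$ put $H_{s}=\langle\phi(s)\rangle$, a finite subgroup of $G$. As in Proposition~\ref{prop}, the Coxeter diagram attached to $S$ is a path on $n+1$ vertices whose two end-edges are labelled $4$; deleting any one vertex leaves a disjoint union of spherical diagrams of type $A$ or $B$, so any $n$ of the elements of $S$ generate a finite subgroup of $\tilde{G}$, and hence so does any proper subset of $S$. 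Consequently, for any $k\leq n$ of the generators the image $\langle\phi(s_{i_{1}}),\dots,\phi(s_{i_{k}})\rangle$ is a finite subgroup of $G$ which, by clause (i), fixes a point; that is, $\cap_{j}\mathrm{Fix}(H_{s_{i_{j}}})\neq\emptyset$. Since $m+1\leq n$, this says precisely that every $m+1$ of the finitely many sets $\{\mathrm{Fix}(H_{s})\}_{s\in S}$ have a common point, so clause (ii) gives $\cap_{s\in S}\mathrm{Fix}(H_{s})\neq\emptyset$. Any point $p$ in this intersection is fixed by every $\phi(s)$, hence by $\phi(\langle S\rangle)=\phi(\tilde{G})$.

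Next I would transport this conclusion to $R^{n}\rtimes E_{n}^{\prime}(R)$. The homomorphism $\tilde{G}\to R^{n}\rtimes E_{n}^{\prime}(R)$ built in the proof of Theorem~\ref{1.2} --- sending each $a_{i}$ to the element of $R^{n}$ with $x$ in the $i$-th coordinate and zeros elsewhere, and sending the $\sigma_{i}$ and the transpositions to the signed-permutation matrices of $E_{n}^{\prime}(R)$ (using the $-I_{n}$-twisted embedding of Lemma~\ref{lem1.1} when $n$ is odd, and the diagonal part $D\subset E_{n}^{\prime}(R)$ when $n$ is even) --- carries $\mathbb{Z}^{n}$ exactly onto $A_{x}$. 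Composing with the given action and applying the previous paragraph, the point $p$ is fixed by the image of $\mathbb{Z}^{n}$, that is, by $A_{x}$; this is the first assertion. For the ``in particular'' clause I restrict a given action of $E_{n+1}^{\prime}(R)$ to its affine subgroup $R^{n}\rtimes E_{n}^{\prime}(R)$ (the hypothesis $n>m$ being the one already standing), in which the standard generators of the $R^{n}$-factor are the elementary matrices $e_{i,n+1}(x)$ for $1\leq i\leq n$. Each of these lies in $A_{x}$ and so fixes $p$, and since all elementary matrices $e_{ij}(x)$ of $E_{n+1}^{\prime}(R)$ are conjugate, every $e_{ij}(x)$ has a fixed point.

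The verifications that $\phi$ and the signed-permutation embedding are well defined, and that the proper sub-diagrams are of type $A$ or $B$, are identical to those already carried out in Proposition~\ref{prop} and Theorem~\ref{1.2}, so the real content is in the middle paragraph. The main point to get right is that the Helly-good axioms speak only of fixed-point sets of \emph{finite} subgroups, so the whole argument has to be arranged so that every set whose intersection is formed is $\mathrm{Fix}$ of a finite group; this is possible precisely because the chosen generators are involutions and every parabolic subgroup of $S$ of rank at most $n$ is spherical. The dimension hypothesis then enters in exactly one spot, as the inequality $m+1\leq n$ which supplies, for free, the $(m+1)$-wise intersections demanded by clause (ii).
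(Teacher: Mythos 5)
Your proposal is correct and follows essentially the same route as the paper: the same generating set of involutions mapped in from the Coxeter group whose diagram is a path with end-edges labelled $4$, clause (i) applied to the finite parabolic images to get the $(m+1)$-wise intersections, clause (ii) in place of Helly's theorem, and the embedding $R^{n}\rtimes E_{n}^{\prime}(R)<E_{n+1}^{\prime}(R)$ plus conjugacy of elementary matrices for the final claim. The only difference is that you spell out more explicitly than the paper how the two Helly-good axioms are invoked, which is a fair expansion rather than a deviation.
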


\begin{proof}
The proof is similar to that of Proposition \ref{prop}. For any $x\in R,$
let 
\begin{eqnarray*}
S_{x} &=&\{(x,\sigma _{1}),(12),(23),...,(n-1,n),\sigma _{n}\} \\
&\subseteq &R^{n}\rtimes E_{n}^{\prime }(R).
\end{eqnarray*}%
Let $H=(W,S)$ be the coxeter group assigned to the Coxeter matrix $(m_{ij})$
defined by $m_{ii}=2,m_{12}=4,m_{i,i+1}=3$ for $2\leq i\leq n$ and $%
m_{n,n+1}=4$ and all other entries $m_{ij}=\infty .$ Recall that 
\begin{equation*}
H=\langle s_{i}\in S\mid (s_{i}s_{j})^{m_{ij}}=1,1\leq i,j\leq n\rangle .
\end{equation*}%
Let $f:H\rightarrow \langle S_{x}\rangle $ be defined as%
\begin{eqnarray*}
s_{1} &\rightarrow &((x,0,\cdots ,0),\sigma _{1}), \\
s_{i+1} &\rightarrow &(i,i+1),1\leq i\leq n-1, \\
s_{n+1} &\rightarrow &\sigma _{n}.
\end{eqnarray*}%
It could be directly checked that $f$ is a group homomorphism. The Coxeter
graph of $(W,S)$ is a path consisting of $n+1$ vertices with the edges $%
(s_{1},s_{2}),(s_{n},s_{n+1})$ labled by $4.$ The subgraph spanned by any $n$
vertices is either a path of type $B_{n}$ or disjoint union of two paths of
type $B_{k}$. By the classification of finite Coxeter groups (cf. Lemma \ref%
{finite}), any spherical subgroup of rank $n$ is finite in $H$. This proves
that the subgroup generated by any $n$ elements is finite. The definition of 
$m$-Helly good spaces implies that $S$ has a common fixed point $x_{0}\in X$%
. Note that $\{(x_{1},x_{2},\cdots ,x_{n})\mid x_{i}\in \mathbb{Z}x\subset
R\}<\langle S\rangle .$

We view $R^{n}=\langle e_{12}(x_{1}),e_{13}(x_{2}),\cdots
,e_{1n}(x_{n-1})\mid x_{i}\in R\rangle <E_{n+1}^{\prime }(R).$ Therefore, $%
e_{12}(x)$ has a fixed point. Since any elementary matrix $e_{ij}(x)$ is
conjugate to $e_{12}(x),$ the last claim is proved.
\end{proof}

\bigskip

Bridson \cite{brid08} obtains `The Triangle Criterion' for group actions on $%
\mathbb{R}$-trees: If $\Gamma $ is generated by $A_{1}\cup A_{2}\cup A_{3}$
and $H_{ij}=\langle A_{i},A_{j}\rangle $ is finite for $i,j=1,2,3$, then $%
\Gamma $ has property $F\mathbb{R}$ (i.e., any isometric action of $\Gamma $
on an $\mathbb{R}$-tree has a fixed point). In particular, the automorphism
group of free group $\mathrm{Aut}(F_{n})$ and the special linear group $%
\mathrm{SL}_{n}(\mathbb{Z}),n\geq 3,$ satisfy the triangle conditions and
thus have property $F\mathbb{R}.$ We prove a similar result for $1$-Helly
good transformation group $(X,G)$:

\begin{theorem}
\label{hellyfix}Let $(X,G)$ be a $1$-Helly good pair. If $\Gamma $ is
generated by $A_{1}\cup A_{2}\cup A_{3}$ and $H_{ij}=\langle
A_{i},A_{j}\rangle $ is finite for $i,j=1,2,3$, then any $G$-action of $%
\Gamma $ on $X$ has a fixed point. In other words, any group homomorphism $%
f:\Gamma \rightarrow X$ has a fixed point. In particular, any $G$-action of $%
\mathrm{Aut}(F_{n})$ (or $\mathrm{SL}_{n}(\mathbb{Z}),n\geq 3$) on $X$ has a
fixed point.
\end{theorem}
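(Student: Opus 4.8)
The plan is to reduce everything to the two defining conditions of a $1$-Helly good pair, applied to the three \emph{single-index} subgroups rather than the pairwise ones. Set $H_{i}=\langle A_{i}\rangle =H_{ii}$ for $i=1,2,3$. Taking $i=j$ in the hypothesis shows each $H_{i}$ is finite, so condition (i) of $1$-Helly goodness guarantees that each fixed point set $\mathrm{Fix}(H_{i})$ is non-empty.

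Next I would compute the pairwise intersections. A point of $X$ is fixed by both $H_{i}$ and $H_{j}$ precisely when it is fixed by the subgroup they generate, so $\mathrm{Fix}(H_{i})\cap \mathrm{Fix}(H_{j})=\mathrm{Fix}(\langle A_{i},A_{j}\rangle )=\mathrm{Fix}(H_{ij})$. Since $H_{ij}$ is finite by hypothesis, condition (i) again gives $\mathrm{Fix}(H_{ij})\neq \emptyset $, so the three fixed sets $\mathrm{Fix}(H_{1}),\mathrm{Fix}(H_{2}),\mathrm{Fix}(H_{3})$ meet pairwise.

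With pairwise intersection in hand, I would apply condition (ii) with $n=1$ to the finite collection $\{H_{1},H_{2},H_{3}\}$: since $1+1=2$ of them always meet, the full intersection $\bigcap_{i=1}^{3}\mathrm{Fix}(H_{i})$ is non-empty. But this intersection is exactly $\mathrm{Fix}(\langle A_{1},A_{2},A_{3}\rangle )=\mathrm{Fix}(\Gamma )$, because $\Gamma $ is generated by $A_{1}\cup A_{2}\cup A_{3}$. Hence the $G$-action of $\Gamma $ has a global fixed point. The ``in particular'' clause then follows immediately: as recorded just before the statement, both $\mathrm{Aut}(F_{n})$ and $\mathrm{SL}_{n}(\mathbb{Z})$ $(n\geq 3)$ satisfy the triangle condition, so the general result applies verbatim.

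The argument is short and I do not anticipate a genuine obstacle; the only point demanding care is the choice of subgroups. One must use the single-index groups $H_{ii}$ and \emph{not} the pairwise groups $H_{ij}$. Applying the Helly condition directly to $\{H_{12},H_{13},H_{23}\}$ is circular, because $\mathrm{Fix}(H_{12})\cap \mathrm{Fix}(H_{13})$ already equals $\mathrm{Fix}(\langle A_{1},A_{2},A_{3}\rangle )=\mathrm{Fix}(\Gamma )$, the very set we are trying to show is non-empty. Using $\{H_{1},H_{2},H_{3}\}$ instead converts the finiteness of each pairwise group $H_{ij}$ precisely into the pairwise-intersection hypothesis that condition (ii) requires, which is what makes the triangle criterion go through in this purely topological setting.
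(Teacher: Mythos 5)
Your proof is correct and follows essentially the same route as the paper, whose own two-sentence argument (each $H_{ij}$ is finite, hence has a fixed point by (i); condition (ii) then gives a global fixed point) leaves implicit exactly the detail you supply: that condition (ii) must be applied to the single-index subgroups $H_{ii}=\langle A_{i}\rangle$, with the finiteness of the $H_{ij}$ furnishing the required pairwise intersections $\mathrm{Fix}(H_{i})\cap\mathrm{Fix}(H_{j})=\mathrm{Fix}(H_{ij})\neq\emptyset$. Your observation that feeding $\{H_{12},H_{13},H_{23}\}$ into the Helly condition would be circular is a correct and worthwhile clarification of the paper's terse write-up.
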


\begin{proof}
Since each $H_{ij}$ is finite, it has a fixed point by condition (i) in the
definition of $n$-Helly good pair. The condition (ii) implies the existence
of a global fixed point.
\end{proof}

Now we discuss some examples, which are $n$-Helly good.

\begin{example}
A $\mathrm{CAT}(0)$ space $X$ of dimension $n$ is $n$-Helly good, with
respect to the isometry group $G=\mathrm{Isom}(X).$
\end{example}

\begin{proof}
This follows easily the Helly's theorem (see Lemma \ref{helly}) and the
Bruhat-Tits fixed point theorem (see \cite{bh}, Corollary 2.8, p.179).
\end{proof}

\bigskip

Recall that a dendrite is a connected compact metrizable space $X$
satisfying one of the following equivalent conditions:

(1) Any two distinct points of $X$ can be separated by a point.

(2) $X$ is locally connected and contains no simple closed curve.

(3) The intersection of any two connected subsets of X remains connected.

(4) $X$ is a one-dimensional absolute retract.

(5) $C(X)$ is projective in the category of unital $C^{\ast }$-algebras.

(6) Any two points are extremities of a unique arc (i.e. a homeomorphic
image of a compact interval in the real line) in $X.$

For more details, see \cite{cc}\cite{cd}\cite{na}. We need the following
result.

\begin{lemma}
\label{den}Let $X$ be a dendrite. Let $G$ be a finite group. Then any action
of $G$ on $X$ has a fixed point.
\end{lemma}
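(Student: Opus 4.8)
The plan is to prove that any finite group $G$ acting on a dendrite $X$ has a fixed point. The natural strategy is induction on the order of $G$, combined with the elementary fixed-point theory of dendrites. First I would recall the two foundational facts about dendrites that make this work: (a) every homeomorphism of a dendrite has a fixed point (this follows because a dendrite is a one-dimensional absolute retract, hence has the fixed point property for continuous self-maps, or alternatively from the Brouwer-type argument that a monotone map of a tree-like continuum fixes a point), and (b) the fixed-point set $\mathrm{Fix}(g)$ of any homeomorphism $g$ is itself a subdendrite, in particular a nonempty connected compact subset that is again a dendrite. Fact (b) is the geometric heart of the matter: since a dendrite is uniquely arcwise connected, the fixed set of a homeomorphism is convex (if $g$ fixes $x$ and $y$ it must fix the unique arc $[x,y]$ between them, as that arc maps to another arc with the same endpoints), and a closed convex subset of a dendrite is again a dendrite.

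With these in hand I would argue as follows. Take any nontrivial element $g \in G$; by fact (a) its fixed set $D := \mathrm{Fix}(g)$ is a nonempty subdendrite. Now I would pass to a normal subgroup to make the centralizer act. Concretely, choose $g$ to lie in the center of a Sylow subgroup, or more simply choose a cyclic normal-enough subgroup: the cleanest route is to take $N \trianglelefteq G$ a minimal nontrivial normal subgroup (or just any central element if $G$ is a $p$-group, then bootstrap via the class equation). For $N$ normal, the whole group $G$ permutes the fixed sets $\{\mathrm{Fix}(h) : h \in N\}$ and in fact preserves the common fixed set $\mathrm{Fix}(N) = \bigcap_{h\in N}\mathrm{Fix}(h)$, because for $g \in G$ and $h \in N$ we have $g\,\mathrm{Fix}(h) = \mathrm{Fix}(ghg^{-1})$ and $ghg^{-1} \in N$. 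Provided $\mathrm{Fix}(N)$ is nonempty, it is a $G$-invariant subdendrite on which the quotient $G/N$ acts, and by the inductive hypothesis $G/N$ has a fixed point there, which is then a global fixed point for $G$.

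The step that carries the real content, and the one I expect to be the main obstacle, is showing that the common fixed set $\mathrm{Fix}(N)$ of the smaller group $N$ is nonempty — i.e. the base case of the induction, which amounts to the statement itself for $N$ in place of $G$. To break this genuine circularity I would reduce to the case where $N$ is cyclic of prime order (a minimal normal subgroup of a finite group, if not elementary abelian, still contains such, and an elementary abelian $p$-group can be handled by iterating the single-generator case): for a single homeomorphism $g$ of prime order, $\mathrm{Fix}(g)$ is nonempty by fact (a), so the induction is seeded by the one-element group and advances by adjoining one generator at a time. The key lemma making each adjunction work is that the fixed set of a homeomorphism \emph{restricted} to an invariant subdendrite is again a nonempty subdendrite — this is just facts (a) and (b) applied inside the smaller dendrite, using that a $G$-invariant subdendrite is itself a dendrite. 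Thus the crux is establishing (a) and (b) rigorously; once the fixed sets are known to be nonempty subdendrites and to transform correctly under the group, the whole argument reduces to repeatedly intersecting nested nonempty subdendrites, and compactness guarantees the final intersection is nonempty.
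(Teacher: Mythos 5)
Your plan has a genuine gap at exactly the point you identify as carrying the real content. The induction on $|G|$ through a minimal normal subgroup $N$ only advances when $G$ has a \emph{proper} nontrivial normal subgroup; if $G$ is a nonabelian simple group (or, more generally, if its minimal normal subgroup is a product of nonabelian simple factors), then $N=G$ and the induction is vacuous. Your proposed repair --- ``adjoining one generator at a time'' --- does not close this case either: to restrict the next generator $g_{k+1}$ to $\mathrm{Fix}(\langle g_1,\dots,g_k\rangle)$ you need $g_{k+1}$ to preserve that set, i.e.\ to normalize $\langle g_1,\dots,g_k\rangle$, which is exactly what fails outside the nilpotent/solvable setting. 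The observation that a nonabelian minimal normal subgroup ``still contains'' a cyclic subgroup of prime order does not help, because that cyclic subgroup is not normal in $G$. So as written your argument proves the lemma for solvable groups only. A secondary imprecision: in your fact (b), the arc $[x,y]$ between two fixed points of $g$ is merely $g$-\emph{invariant} a priori, not pointwise fixed (and for a homeomorphism of infinite order $\mathrm{Fix}(g)$ can indeed be disconnected); you need the finite order of $g$ to conclude that a periodic homeomorphism of an arc fixing both endpoints is the identity. That is fixable, but it should be said.

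For comparison: the paper does not prove this lemma at all --- it cites it as well known (Duchesne--Monod, Corollary 5.1). The standard self-contained argument, which also repairs your gap, avoids induction on the group entirely: pick $x\in X$, form the $G$-invariant set $T=\bigcup_{g,h\in G}[gx,hx]$, which is a compact topological tree with finitely many branch points and endpoints; $G$ then acts simplicially on a finite simplicial tree (after subdividing at the finite $G$-invariant set of branch points, endpoints, and orbit points), and a finite group acting simplicially on a simplicial tree fixes a vertex or the midpoint of an edge. This is precisely the device the paper itself uses in its Lemma on uniquely arcwise connected spaces, where the union $\bigcup_{h\in H}[x,hx]$ is observed to be a compact tree. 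I would recommend replacing your induction with this orbit-tree argument, or at least adding it to handle the simple case.
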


\begin{proof}
When $G$ is finite, this is well-known (for example, see \cite{dm},
Corollary 5.1).
\end{proof}

A dendrite $X$ is actually $1$-Helly good, with respect to the homeomorphism
group $\mathrm{Homeo}(X).$ This is a special case of the following result. A
uniquely arcwise connected space $X$ is a hausdorff topological space in
which every pair of distinct points $x,y$ are joined by a unique arc $[x,y]$%
, which is a subset homeomorphic to a closed real interval. For more
information, see Bowditch \cite{bow}.

\begin{lemma}
\label{unique}Let $X$ be a uniquely arcwise connected space. Then $X$ is $1$%
-Helly good, with respect to the homeomorphism group $\mathrm{Homeo}(X).$
\end{lemma}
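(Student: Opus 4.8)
The plan is to verify the two defining conditions of a $1$-Helly good pair for $(X,\mathrm{Homeo}(X))$ where $X$ is uniquely arcwise connected. Condition (i) requires that every finite subgroup $H<\mathrm{Homeo}(X)$ has a nonempty fixed point set. Condition (ii) is the $1$-dimensional Helly property: given finitely many finite subgroups $H_1,\dots,H_m$ whose fixed point sets pairwise intersect (i.e.\ every $2$ of them meet), the total intersection $\cap_i \mathrm{Fix}(H_i)$ is nonempty. Since a dendrite is in particular uniquely arcwise connected, proving the lemma in this generality subsumes the dendrite case and Lemma~\ref{den}.

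For condition (i), I would first establish a fixed point for a \emph{single} homeomorphism, then bootstrap to finite groups. The key geometric fact is that in a uniquely arcwise connected space one can define, for any two points $x,y$, the unique arc $[x,y]$, and this gives a well-behaved notion of betweenness and of the \emph{median} or \emph{centroid} of a finite set of points. For a finite group $H$ and any point $x_0$, the orbit $Hx_0$ is a finite $H$-invariant set; its ``center'' (the intersection of the arcs joining orbit points, or more robustly the unique point minimizing total arc-distance / the Helly point of the tree-like convex hull) should be $H$-invariant and hence fixed. The cleanest route is probably to cite or reprove that uniquely arcwise connected spaces have the fixed-point property for finite groups via the convexity of arcs: the convex hull of a finite set in such a space is itself a (topological) tree, finite groups act on finite trees with a fixed point (a vertex or the midpoint of an invariant edge), and this fixed point persists in $X$.

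For condition (ii), the essential input is that fixed point sets of homeomorphisms of $X$ are \emph{convex} (arcwise convex: if $x,y\in \mathrm{Fix}(g)$ then the unique arc $[x,y]$ is pointwise fixed, because $g$ maps the arc $[x,y]$ to an arc between the same endpoints, and uniqueness forces $g[x,y]=[x,y]$; one then argues $g$ fixes the arc pointwise using its orientation-preserving behavior on the arc or an order-preservation argument). Granting convexity, each $\mathrm{Fix}(H_i)$ is a closed arcwise-convex subset, and the family of such subsets in a uniquely arcwise connected space satisfies the $1$-dimensional Helly property exactly as convex subsets of an $\mathbb{R}$-tree do: pairwise intersection of ``convex'' (arc-connected, arc-convex) subsets forces a common point, by the tree-like structure (no loops means the pairwise-meeting arcs cannot be arranged to avoid a common point). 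I would prove this directly by induction on $m$, using that in a tree any three pairwise-intersecting convex sets share a point (the standard ``median'' argument).

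\textbf{The main obstacle} will be condition (ii), and specifically making rigorous the claim that fixed point sets are arcwise convex and that the tree-Helly property holds \emph{purely topologically}, without a metric. In an $\mathbb{R}$-tree these facts follow from metric convexity, but here I have only the unique-arc axiom, so I must replace metric arguments by the combinatorics of arcs: the key lemma is that for any three points the three arcs $[x,y],[y,z],[z,x]$ meet in a single ``median'' point $m(x,y,z)$, which characterizes the tree structure and drives both the convexity of fixed sets and the Helly conclusion. Establishing this median exists and is unique in an arbitrary uniquely arcwise connected Hausdorff space (Bowditch's framework \cite{bow}) is the technical heart; once it is in hand, both conditions follow by the standard tree arguments, and the final assertion that dendrites and trees are $1$-Helly good is immediate.
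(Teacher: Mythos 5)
Your proposal is correct and follows essentially the same route as the paper: for condition (i) the paper takes the union of the arcs from a basepoint to its finite orbit, notes this is an $H$-invariant compact tree (hence a dendrite, by Bowditch) and applies the fixed-point theorem for finite group actions on dendrites, while for condition (ii) it proves $\mathrm{Fix}(H_i)$ is arcwise convex exactly as you suggest (an invariant arc with fixed endpoints is pointwise fixed, since a finite-order homeomorphism of an interval fixing the ends is the identity) and then applies Helly's theorem for trees. The only difference is that the paper avoids the general median machinery you flag as the technical heart by restricting at the last step to the compact tree spanned by the finitely many pairwise-intersection witness points, where the tree Helly property is standard.
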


\begin{proof}
Let $H<\mathrm{Homeo}(X)$ be a fintie subgroup. Fix $x\in X.$ Since the
orbit $Hx$ is finite and there is a unique arc $[x,hx]$ connecting $x$ and $%
hx$ for each $h\in H,$ the union $\cup _{h\in H}[x,hx]$ is a compact tree
and thus a dendrite (cf. \cite{bow}, Lemma 1.2). Note that the union is
invariant under the action of $H.$ Lemma \ref{den} says that any action of $%
H $ has a fixed point. Therefore, the action of $H$ on $X$ has a non-empty
fixed point set $\mathrm{Fix}(H)$. This proves the condition (i).

For the condtion (ii), let $H_{1},H_{2},\cdots ,H_{n}$ be a collection of
finitely many finite subgroups. For any two points $x,y\in \mathrm{Fix}%
(H_{i}),$ the unique arc $[x,y]$ is invariant under the action of $H_{i}.$
Since a finite group cannot act freely on the real line $\mathbb{R},$ every
point in $[x,y]$ is a fixed point of $H_{i}.$ This proves that the fixed
point set $\mathrm{Fix}(H_{i})$ is path connected. For each $i,j=1,2,\cdots
,n,$ choose $x_{i,j}\in \mathrm{Fix}(H_{i})\cap \mathrm{Fix}(H_{j}).$ Denote
by $A=\{x_{ij}:i,j=1,2,\cdots ,n\}$ and $Y=\cup _{x,y\in A}[x,y],$ which is
a compact tree. The result follows from Helly's theorem for trees where
connectedness coincides with convexity.
\end{proof}

\bigskip

Theorem \ref{last} is a corollary of Lemma \ref{unique} and Theorem \ref%
{hellyfix}.

\bigskip

\bigskip

\bigskip

Department of Mathematical Sciences, Xi'an Jiaotong-Liverpool University,
111 Ren Ai Road, Suzhou, Jiangsu 215123, China.

E-mail: Shengkui.Ye@xjtlu.edu.cn

\end{document}